\theoremstyle{definition}
\newtheorem{dfn}[equation]{Definition}
\newtheorem{exm}[equation]{Example}
\theoremstyle{plain}
\newtheorem{thm}[equation]{Theorem}
\newtheorem{prop}[equation]{Proposition}
\newtheorem{lem}[equation]{Lemma}
\newtheorem{cor}[equation]{Corollary}
\theoremstyle{remark}
\newtheorem{rmk}[equation]{Remark}
\newcommand{\cF}{\mathcal F}
\newcommand{\cG}{\mathcal G}
\newcommand{\cS}{\mathcal S}
\newcommand{\cT}{\mathcal T}
\newcommand{\cZ}{\mathcal Z}
\newcommand{\bC}{\mathbb{C}}
\newcommand{\bR}{\mathbb{R}}
\newcommand{\bN}{\mathbb{N}}
\newcommand{\bZ}{\mathbb{Z}}
\DeclareMathOperator{\Sp}{Sp}
\newcommand{\acts}{\curvearrowright} 
\newcommand{\crit}[1]{\pi_{#1,\mathrm{mc}}} %critical components of v
\DeclareMathOperator{\critical}{crit}   %critical path
\newcommand{\pmc}{\pi_{\mathrm{mc}}}  %minimal components
\newcommand{\harm}{\chi} %notation of harmonic vector 
\newcommand{\gauge}{\alpha} %notation of gauge action}
\title[Limits of KMS states on $\cT C^*(E)$]{Limits of KMS states on Toeplitz algebras of finite graphs}
 \author[T. Takeishi]{Takuya Takeishi}
 \address[Takuya Takeishi]{
  Faculty of Arts and Sciences\\
  Kyoto Institute of Technology\\
  Matsugasaki, Sakyo-ku, Kyoto\\
  Japan}
 \email[T. Takeishi]{takeishi@kit.ac.jp}
\begin{document}

\begin{abstract}
The structure of KMS states of Toeplitz algebras associated to finite graphs equipped with the gauge action is determined by an Huef--Laca--Raeburn--Sims. Their results imply that extremal KMS states of type I correspond to vertices, while extremal KMS states at critical inverse temperatures correspond to minimal strongly connected components. The purpose of this article is to clarify the role of non-minimal components and the relation between vertices and minimal components in the KMS-structure. For each component $C_0$ and each vertex $v \in C_0$, the KMS states at the critical inverse temperature of $v$ obtained by the limit of type I KMS states associated to $v$ uniquely decomposes into a convex combination of KMS states associated to minimal components. We show that for each minimal component $C$, the coefficient of the KMS state associated to $C$ is nonzero if and only if there exists a maximal path from $C$ to $C_0$ in the graph of components.   
\end{abstract}

\maketitle

\section{Introduction}
The research of equilibrium states of graph algebras originates from the work of Olesen--Pedersen \cite{OlePed}, in which they found the unique existence of KMS states on Cuntz algebras equipped with the gauge action. Their work has been generalised to Cuntz--Krieger algebras associated to irreducible matrices by Enomoto--Fujii--Watatani \cite{EFW}. Their method is an application of Perron--Frobenius theory, which is an essence even for the case of general finite graphs. As a consequence, it is known that if $E$ is a finite strongly connected graph, then there exists a unique KMS state on $C^*(E)$ with respect to the gauge action
at $\beta = \log \rho$, where $\rho$ is the spectral radius of the adjacency matrix of $E$. 
However, the KMS-structure is more complicated when a finite graph is reducible. KMS states of arbitrary finite graph algebras have been completely determined in the work of an Huef--Laca--Raeburn--Sims \cite{aHLRS15} combined with the recent work by the author and C. Bruce \cite[Section 5]{BT1}. As a consequence, if $E$ is a finite graph, then there is a one-to-one correspondence between extremal KMS states of type III and minimal strongly connected components of $E$, see Theorem \ref{thm:aHLRS15}. Here, the curious point is that not all components contribute to type III KMS states. 

On the other hand, we can observe a phase transition phenomenon by extending the C*-dynamical system to the Toeplitz algebra $\cT C^*(E)$ of a finite graph $E$. In the C*-dynamical system $\cT C^*(E)$ with the gauge action, each vertex corresponds to a chain of extremal type I KMS states $\varphi_{\beta,v}$ for inverse temperatures $\beta$ such that the partition function $Z_v(\beta)$ converges, see Section \ref{ssec:typeI} for the detail. Hence, the shape of KMS-simplex of $\cT C^*(E)$ spectaculary changes at each critical inverse temperature $\beta_v$. 

Comparing the structure of type I and type III KMS states, we come up with two natural questons: 
\begin{enumerate}
\item What is the role of non-minimal components in the KMS-structure?
\item What is the relation between type I and type III KMS states? 
\end{enumerate}
For the second question, we can expect a natural relation between type I KMS states of vertices $v$ and type III KMS states of components containing $v$, if $v$ belongs to a minimal component. However, the relation is not obvious when $v$ belongs to a non-minimal component, because non-minimal components do not give rise to KMS states. 

The aim of this article is to give answers to these questions. For the second question, it is natural to consider the limit of $\varphi_{\beta,v}$ as $\beta \to \beta_v+0$ with respect to the weak*-toplogy in the state space. Then we obtain a KMS state $\varphi_{v}$ at the critical inverse temperature for each vertex $v$. Here, the point is that $\varphi_{v}$ is not necessarily extremal. Since there are only finitely many extremal points in the KMS-simplices of $\cT C^*(E)$ for a finite graph $E$, the KMS state $\varphi_v$ may decompose into a convex combination of extremal KMS states $\psi_C$ associated to minimal components $C$. We first observe that $\varphi_{v}$ is a convex combination of $\psi_C$ for minimal components $C$ admitting a path from $C$ to $v$ in $E$ and with spectral radius $e^{\beta_v}$  (Corollary \ref{cor:coefficients}). Then, the main theorem of this article is the determination of minimal components $C$ such that the coefficient of $\psi_C$ is positive in the convex combination of $\varphi_v$ (Theorem \ref{thm:main}). 
In order to formulate the main theorem, we introduce a graph of strongly connected components $B(E,C_0)$ associated to a finite graph $E$ and a fixed strongly connected component $C_0$ of $E$. As the main theorem, we show that the coefficient of $\psi_C$ is positive if and only if there is a ``maximal path" (Definition \ref{def:maximal} \eqref{item:path}) from $C$ to $C_0$ in $B(E,C_0)$, where $C_0$ is the strongly connected component containing $v$. The information of ``intermediate" critical components (Definition \ref{def:maximal} \eqref{item:critical}) is in fact contained in the maximality paths, and is reflected to the convex decomposition of $\varphi_v$. Hence, this result gives a complete answer to the second question, and a partial answer to the first question. The role of non-minimal non-critical components are still not clear, which may be an interesting problem. 

In addition, we verify the effect of intermediate critical components in a concrete example (Example \ref{example:intermediate}). In the main theorem, we have not paid attention to the values of coefficients when they are positive. However, we may expect that the values of coefficients in the convex decomposition have more information. 
We examine this possibility in a concrete example (Example \ref{example:further}), which tells us that minimal components with spectral radius less than $e^{\beta_v}$ have an effect to the coefficients in the convex decomposition of $\varphi_v$. 
For a future direction, it is an  interesting problem to catch a qualitative feature of the convex decomposition which validates the phenomenon in Example \ref{example:further}. 

\subsection*{Acknowledgements}
This work is supported by JSPS KAKENHI Grant Number JP19K14551 (2019-2023). 

\section{Preliminaries}

In this section, we collect known results of KMS states of Toeplitz algebras of graphs. 
We fix basic notations at first. 
The symbol $\bR_+$ denotes the set of positive real numbers. 
For a square matrix $A$ of finite size, its spectral radius is denoted by $\rho(A)$. 
For a set $S$, its cardinality is denoted by $|S|$. 

For the definition of Toeplitz algebra of graphs, there are two different conventions, such as \cite{CarLar} and \cite{aHLRS13}. In this article, we adopt the convention in \cite{CarLar}. Note that several references in this article adopt the opposite convention. 

\subsection{Directed graphs, Toeplitz algebras, and KMS states}
A \emph{directed graph} $E=(E^0,E^1,s,r)$ consists of a set of vertices $E^0$, a set of edges $E^1$, source and range maps 
$s,r \colon E^1 \to E^0$. A \emph{finite path} $\mu$ in the graph $E$ is a finite sequence $\mu=e_0e_1 \cdots e_{n-1}$ of edges such that $r(e_i)=s(e_{i+1})$ for $i=0,1,\cdots n-1$. For a path $\mu=e_0e_1 \cdots e_{n-1}$, $|\mu| := n$ is called the \emph{length} of $\mu$, and the set of paths of length $n$ is denoted by $E^n$. Here, a vertex is considered as a path of length $0$. Let $E^* = \bigcup_{n=0}^\infty E^n$ be the set of all finite paths in $E$. If $\mu,\nu \in E^*$ satisfies $r(\mu)=s(\nu)$, then $\mu$ and $\nu$ are composable in the obvious way, and the composition is denoted by $\mu \nu$. An \emph{infinite path} $\mu$ in $E$ is an infinite sequence $\mu=e_0e_1 \cdots$ of edges such that $r(e_i)=s(e_{i+1})$ for $i=0,1,\cdots$. The set of all infinite paths is denoted by $E^\infty$, and let 
$E^{\leq \infty} := E^* \cup E^\infty$ be the set of all finite or infinite paths. We extend the definition of the composition $\mu \nu$ 
for $\mu \in E^*$ and $\nu \in E^{\leq \infty}$ in the natural way. The space $E^{\leq \infty}$ is a compact space with respect to the pointwise convergence topology (see \cite[Section 2]{Web} for the detail). 
A graph $E$ is called \emph{finite} if both $E^0$ and $E^1$ are finite. In this article, we restrict our attention to finite graphs. 

We fix notations related to graphs. Let $E$ be a finite graph. 
For $v,w \in E^0$, let $vE^*, E^*w, vE^*w$ denote the set of finite paths starting from $v$, the set of finite paths ending at $w$, the set of finite paths from $v$ to $w$ respectively. For a natural number $n$, $vE^n, E^nw, vE^nw$ denote the sets of paths of length $n$ defined in the same way. 
A \emph{strongly connected component} $C$ of $E$ is an equivalence class of $E^0$ with respect to the equivalence relation $\sim$ defined by $v \sim w$ if $vE^*w \neq \emptyset$ and $wE^*v \neq \emptyset$. 
The set of all strongly connected components of $E$ is denoted by $\pi(E)$. 
Since we do not consider other sorts of connected components in this article, strongly connected components are simply called \emph{components}. The graph $E$ is called \emph{strongly connected} if $|\pi(E)|=1$. 
For a subset $S \subset E^0$, let $E_S$ be the \emph{restriction} of $E$ to $S$. That is, $E_S$ is a graph with the vertex set $E_S^0 = S$, and the edge set $E_S^1$ is equal to the set of edges $e \in E^1$ such that $s(e), r(e) \in S$. 
The matrix $A_E = [|vE^1w|]_{v,w \in E^0}$ indexed by $E^0$ is called the \emph{adjacency matrix} of $E$. For subsets $S,T \subset E^0$, the adjacency matrix of $E_S$ is denoted by $A_S$, and the $S\times T$ block of $A_E$ is denoted by $A_{S,T}$. Note that $A_S = A_{S \times S}$. For a subset $D \subset E^0$ define $S_D := s(E^*D)$. 

Let $E$ be a directed graph. The \emph{Toeplitz algebra of $E$}, which is first defined in \cite[Section 4]{FowRae}, is the universal C*-algebra $\cT C^*(E)$ generated by projections $p_v$ for $v \in E^0$ and partial isometries $s_e$ for $e \in E^1$ subjecting to the relations 
\[ \begin{cases}
s_e^*s_e = p_{r(e)} & \mbox{ for all } e \in E^1, \\
(s_es_e^*)(s_fs_f^*) = 0 & \mbox{ for all } e, f \in E^1 \mbox{ with } e \neq f, \\
p_vp_w = 0 & \mbox{ for all } v, w \in E^0 \mbox{ with } v \neq w, \\
\sum_{e \in vE^1} s_es_e^* \leq p_v & \mbox{ for all } v \in E^0.
\end{cases} \]
Note that $\cT C^*(E)$ is unital if $E$ is finite. In this case, we have 
$1 = \sum_{v \in E^0} p_v$. 
For each $v \in E^0$, let 
\[ \delta_v = p_v - \sum_{vE^1}s_es_e^*. \]
Then, $\delta_v$ is a minimal projection of $\cT C^*(E)$. 
Define an action $\gauge \colon \bR \acts \cT C^*(E)$ by 
\[ \gauge_t (p_v) = p_v,\ \gauge_t(s_e) = e^{it}s_e \]
for each $t \in \bR$, $v \in E^0$, and $e \in E^1$. Such an action uniquely exists by the universal property of $\cT C^*(E)$. 
The action $\alpha$ is called the \emph{gauge action}. 

We investigate \emph{KMS$_\beta$ states} of the C*-dynamical system $(\cT C^*(E), \gauge)$ for $\beta \in \bR_+$. See \cite[Section 5.3]{BR2} for the basics of KMS states. In particular, a KMS$_\beta$ state is extremal if and only if it is a factor state by \cite[Theorem 5.3.30]{BR2}. The \emph{type} of a KMS$_\beta$ state $\varphi$ is the type of the factor generated by the GNS representation of $\varphi$. 

\subsection{KMS states of type I and partition functions of vertices} \label{ssec:typeI}
The structure of extremal KMS states of $\cT C^*(E)$ is determined in \cite[Theorem 5.3]{aHLRS15}. We summarize their results for KMS states of type I and type III separately. 

\begin{dfn}
Let $E$ be a finite graph. For a vertex $v \in E^0$, the funtion $Z_v \colon \bR \to (0,\infty]$ defined by
\[ Z_v(\beta) := \sum_{\nu \in E^*v} e^{-\beta |\nu|} \] 
is called \emph{the partition function of $v$}. Moreover, 
\[ \beta_v = \inf \{\beta \in \bR \mid Z_v(\beta) < \infty \} \in \{-\infty\} \cup [0,\infty] \]
is called the \emph{critical inverse temperature of $v$}. 
\end{dfn}

The partition function $Z_v$ is introduced in \cite[Equation 5.8]{CarLar}, and later an abstract characterisation is given in \cite{BT1}. 
The function $Z_v$ is the generating function of finite paths terminating at $v$. 
Since $E$ is finite, we can see that $\beta_v < \infty$ for all $v \in E^0$ (see also \cite[Theorem 4.11]{BT1}). 
We are interested only in vertices satisfying $\beta_v>0$, which is a necessary condition to have type III KMS states at the critical inverse temperature. Hence, we always restrict $Z_v$ to $\bR_+$ when we consider partition functions.

\begin{rmk} \label{rmk:diverge}
For a graph $E$ and a vertex $v \in E^0$, it is known that we have 
\[ \lim_{\beta \to \beta_v+0} Z_v(\beta) = \infty. \]
See \cite[Lemma 7.2]{aHLRS15}, for example. 
\end{rmk}

Extremal KMS states of type I are characterised as follows:

\begin{thm}[{\cite[Theorem 5.3 (b)]{aHLRS15}}] \label{thm:typeI}
Let $E$ be a finite graph. 
For any $\beta \in \bR_+$, there is a one-to-one correspondence between the set of extremal type I KMS$_\beta$ states of $(\cT C^*(E), \gauge)$ and the set of vertices $v \in E^0$ satisfying $\beta>\beta_v$. 
\end{thm}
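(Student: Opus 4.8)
The plan is to work through the canonical ideal structure of $\cT C^*(E)$. First I would record that, since each $\delta_v$ is a minimal projection with $\delta_v s_e = 0$ for every $e \in vE^1$, the elements $s_\mu \delta_v s_\nu^*$ with $\mu,\nu \in E^*v$ form a system of matrix units, and the copies of $\cK(\ell^2(E^*v))$ (written $\cK_v$ from now on) that they generate are mutually orthogonal as $v$ ranges over the finite set $E^0$ (using $p_vp_w=0$ for $v\neq w$, together with $\delta_v s_\sigma = 0$ whenever $\sigma$ has positive length and $s(\sigma)=v$). Consequently the ideal $\cI$ generated by $\{\delta_v : v\in E^0\}$ equals $\bigoplus_{v\in E^0}\cK_v$, and $\cT C^*(E)/\cI \cong C^*(E)$ because killing the $\delta_v$ imposes the Cuntz--Krieger relations. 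Next I would extract, for an arbitrary KMS$_\beta$ state $\psi$, the constraints forced on $\cI$: $\gauge$-invariance kills $\psi(s_\mu\delta_vs_\nu^*)$ unless $|\mu|=|\nu|$, and then the KMS condition together with $s_\nu^*s_\mu=0$ for distinct paths of equal length gives
\[ \psi(s_\mu\delta_vs_\nu^*) = e^{-\beta|\mu|}\psi(\delta_v)\ \text{ if }\mu=\nu,\qquad \psi(s_\mu\delta_vs_\nu^*)=0\ \text{ otherwise}. \]
Summing the nonnegative numbers $\psi(s_\mu\delta_vs_\mu^*)=e^{-\beta|\mu|}\psi(\delta_v)$ over $\mu\in E^*v$ yields $Z_v(\beta)\psi(\delta_v)\le 1$, so $\psi(\delta_v)>0$ already forces $Z_v(\beta)<\infty$, hence $\beta\ge\beta_v$, hence $\beta>\beta_v$ by Remark \ref{rmk:diverge}.

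For the existence half I would fix $v$ with $\beta>\beta_v$, so that $Z_v(\beta)<\infty$, and let $\varphi_{\beta,v}$ be the unique extension to $\cT C^*(E)$ of the state on $\cK_v$ determined by $s_\mu\delta_vs_\nu^*\mapsto e^{-\beta|\mu|}/Z_v(\beta)$ for $\mu=\nu$ and $0$ otherwise; this state on the ideal $\cK_v$ is faithful and of norm one, so the extension exists, is unique, and is supported on the summand $\cK_v$. A short computation with the range projections of the $s_\mu s_\mu^*$ acting on $\ell^2(E^*v)$ shows that $\varphi_{\beta,v}$ is $\gauge$-invariant and satisfies $\varphi_{\beta,v}(s_\mu s_\mu^*)=e^{-\beta|\mu|}\varphi_{\beta,v}(p_{r(\mu)})$ for all $\mu$; since for a $\gauge$-invariant state these identities are equivalent to the KMS$_\beta$ condition, $\varphi_{\beta,v}$ is a KMS$_\beta$ state. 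As it is supported on $\cK_v$ with faithful restriction there, its GNS representation coincides with that of a faithful normal state on $\cK(\ell^2(E^*v))$ extended to $\cT C^*(E)$, hence generates a type I factor; so $\varphi_{\beta,v}$ is an extremal type I KMS$_\beta$ state, and distinct vertices give distinct (indeed mutually disjoint) states because the $\cK_v$ are distinct summands of $\cI$.

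For the converse, let $\psi$ be an extremal, hence factor (\cite[Theorem 5.3.30]{BR2}), KMS$_\beta$ state of type I, with GNS factor $M$. The weak closure $\pi_\psi(\cI)''$ is a two-sided ideal of $M$, so it is $0$ or $M$. If it is $0$, then $\pi_\psi(\cI)=0$ and $\psi$ descends to a type I KMS$_\beta$ state of $(C^*(E),\gauge)$, which is impossible since all KMS states of a graph $C^*$-algebra with respect to the gauge action are of type III (Theorem \ref{thm:aHLRS15}). Hence $\pi_\psi(\cI)''=M$; since $\cI=\bigoplus_w\cK_w$ with $\cK_w\cK_{w'}=0$ for $w\neq w'$, the central supports of the $\pi_\psi(\cK_w)''$ are mutually orthogonal projections of $M$ whose (finite) sum is $1$, so exactly one of them --- say that of $\pi_\psi(\cK_v)''$ --- is $1$ and the rest vanish; this forces $\pi_\psi(\cK_w)''=0$ for $w\neq v$ and $\pi_\psi(\cK_v)''=M$. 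Thus $\psi$ annihilates $\cK_w$ for $w\neq v$ and has total mass one on $\cK_v$, and then the displayed formula gives $Z_v(\beta)\psi(\delta_v)=1$, so $Z_v(\beta)<\infty$ (whence $\beta>\beta_v$) and $\psi$ is pinned down on $\cK_v$ to coincide with $\varphi_{\beta,v}$; being supported on $\cK_v$, we get $\psi=\varphi_{\beta,v}$, and the bijection $v\leftrightarrow\varphi_{\beta,v}$ follows. The hard part will be precisely this converse: proving that a type I extremal state can neither factor through the quotient $C^*(E)$ nor spread over several summands of $\cI$, i.e. reconciling the factoriality of $\psi$ with the direct-sum decomposition of $\cI$ and with the type III nature of gauge equilibrium on strongly connected graph algebras --- which in a genuinely self-contained treatment is established in tandem with the type III half of \cite[Theorem 5.3]{aHLRS15}.
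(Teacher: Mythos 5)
The paper offers no proof of this statement: Theorem \ref{thm:typeI} is imported verbatim from \cite[Theorem 5.3 (b)]{aHLRS15}, so there is no internal argument to compare yours against. On its own terms, your reconstruction via the ideal $\cI=\bigoplus_{v}\cK_v$ is the standard route, and most of it is sound: the matrix-unit computation, the bound $Z_v(\beta)\psi(\delta_v)\le 1$ forcing $\beta>\beta_v$ whenever $\psi(\delta_v)>0$, the construction of $\varphi_{\beta,v}$ as the unique state extension of the density-matrix state on $\cK_v$ (which recovers the formulas of Proposition \ref{prop:measure}), and the central-support argument forcing a factor state with $\pi_\psi(\cI)''=M$ to live on exactly one summand $\cK_v$ are all correct.

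The one step that is wrong as written is the exclusion of the case $\pi_\psi(\cI)=0$. Two problems: (i) the quotient $\cT C^*(E)/\cI$ is not $C^*(E)$ when $E$ has sinks --- for a vertex $w$ with $wE^1=\emptyset$ one has $\delta_w=p_w$, so your quotient kills $p_w$, which $C^*(E)$ does not; and (ii) the assertion that \emph{all} KMS states of a graph algebra with the gauge action are of type III is false in general (sinks again produce type I equilibrium states on $C^*(E)$), and it is in any case not what Theorem \ref{thm:aHLRS15} says: that theorem is purely about $\beta$-harmonic vectors and contains no type information. The correct input, which the paper does supply, is that $\pi_\psi(\cI)=0$ is equivalent to $\psi(\delta_w)=0$ for all $w\in E^0$, i.e.\ to $\psi$ being supported in $E^\infty$, and that the extremal KMS states supported in $E^\infty$ are precisely the $\psi_C$ for $C\in\pmc(E)$, which are of type III by \cite[Theorem 5.17]{BT1}; with that substitution the contradiction with type I goes through. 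Be aware that this makes your proof of the type I classification logically dependent on the type III classification (a dependency the original argument in \cite{aHLRS15} avoids by parametrising all KMS states at once); that is acceptable in the present context, where both halves are taken as known, but it means your argument is not self-contained in the way the last sentence of your proposal already concedes.
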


\begin{dfn}
Let $E$ be a finite graph. 
For a vertex $v \in E^0$ and a real number $\beta>\beta_v$,  
the extremal type I KMS$_\beta$ state of $(\cT C^*(E), \gauge)$ corresponding to $v$ by Theorem \ref{thm:typeI} is denoted by $\varphi_{\beta,v}$. 
\end{dfn}

We introduce other sorts of generating functions, which play a fundamental role in this article.  

\begin{dfn}
Let $E$ be a finite graph, 
let $v,w \in E^0$, and let $C \in \pi(E)$. 
Define functions on $\bR_+$ by 
\[ 
Z_{w,v}(\beta) := \sum_{\nu \in wE^*v} e^{-\beta |\nu|},\ 
Z_v^C(\beta) := \sum_{w \in C} Z_{w,v}(\beta),  \mbox{ and }
Z_{w,v}^C(\beta) := \sum_{w' \in C} Z_{w,w'}(\beta)Z_{w',v}(\beta). 
\]
\end{dfn}
The functions $Z_{w,v}, Z_v^C, Z_{w,v}^C$ are generating functions of finite paths from $w$ to $v$, of finite paths from $C$ terminating at $v$, and of finite paths from $w$ to $v$ going through $C$ respectively. Thus, if $v \in C$, then $Z_v^C$ is the partition function of $v$ in the graph $E_C$. Moreover, if $v,w \in C$, then we have $Z_{w,v}=Z_{w,v}^C$. 

The KMS$_\beta$ state $\varphi_{\beta,v}$ is conceretely determined in \cite{aHLRS15} (see also \cite[Lemma 4.5]{BT1}) as follows: 

\begin{prop}[{\cite[Corollary 5.2]{aHLRS15}}] \label{prop:measure}
Let $E$ be a finite graph. 
For any $v \in E^*$ and $\beta>\beta_v$, we have 
\[ \varphi_{\beta,v}(p_w) 
=\frac{Z_{w,v}(\beta)}{Z_v(\beta)} \mbox{ and }
\varphi_{\beta,v}(\delta_v) = \frac{1}{Z_v(\beta)}. 
\]
In particular, $\varphi_{\beta,v}(p_w)=0$ if $wE^*v = \emptyset$. 
\end{prop}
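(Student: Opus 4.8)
The plan is to recall the explicit model for $\varphi_{\beta,v}$ — the normal state attached to the path-space representation of $\cT C^*(E)$ based at the vertex $v$, weighted by the Gibbs operator of the length function — and then to read the two equalities off by a one-line trace computation; the final ``in particular'' clause is then immediate.

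First I would set up the representation. Let $H_v:=\ell^2(E^*v)$ have orthonormal basis $\{\xi_\mu:\mu\in E^*v\}$ indexed by the finite paths \emph{ending} at $v$ (this is where the choice of convention matters), and define $\pi_v$ on generators by $\pi_v(p_u)\xi_\mu=\xi_\mu$ when $s(\mu)=u$ and $0$ otherwise, and $\pi_v(s_e)\xi_\mu=\xi_{e\mu}$ when $r(e)=s(\mu)$ and $0$ otherwise. A routine check shows that $\pi_v$ respects the defining relations of $\cT C^*(E)$, that $\pi_v(\delta_v)$ is the rank-one projection onto $\xi_v$ while $\pi_v(\delta_u)=0$ for $u\neq v$, and that the unitaries $U_t\xi_\mu:=e^{it|\mu|}\xi_\mu$ implement the gauge action, $U_t\pi_v(a)U_t^*=\pi_v(\gauge_t(a))$, with generator the length operator $N\xi_\mu=|\mu|\xi_\mu$. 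Since $\beta>\beta_v$, the operator $e^{-\beta N}$ is trace class with $\mathrm{Tr}(e^{-\beta N})=Z_v(\beta)<\infty$; set $\rho_v:=Z_v(\beta)^{-1}e^{-\beta N}$ and $\varphi_{\beta,v}(a):=\mathrm{Tr}(\rho_v\pi_v(a))$. Because $\rho_v$ is a function of $N$, it commutes with $e^{-\beta N}$, and the standard Gibbs-state argument (cyclicity of the trace applied to $\gauge$-analytic elements) shows $\varphi_{\beta,v}$ is a KMS$_\beta$ state rather than merely gauge-invariant. Moreover $\pi_v(s_\mu\delta_v s_\nu^*)=\theta_{\xi_\mu,\xi_\nu}$ for $\mu,\nu\in E^*v$, so $\pi_v(\cT C^*(E))\supseteq\mathcal K(H_v)$; hence $\pi_v$ is irreducible with $\pi_v(\cT C^*(E))''=B(H_v)$, and since $\rho_v$ has full support the GNS representation of $\varphi_{\beta,v}$ generates this same type I factor. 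Thus $\varphi_{\beta,v}$ is an extremal type I KMS$_\beta$ state; since $\varphi_{\beta,v}(\delta_v)\neq0$ while $\varphi_{\beta,v}(\delta_w)=0$ for $w\neq v$, the states so constructed are pairwise distinct, so by Theorem \ref{thm:typeI} this $\varphi_{\beta,v}$ is the extremal type I KMS$_\beta$ state associated to $v$.

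With the model in hand the formulas drop out: $\varphi_{\beta,v}(p_w)=Z_v(\beta)^{-1}\sum_{\mu\in E^*v}e^{-\beta|\mu|}\langle\pi_v(p_w)\xi_\mu,\xi_\mu\rangle=Z_v(\beta)^{-1}\sum_{\mu\in wE^*v}e^{-\beta|\mu|}=Z_{w,v}(\beta)/Z_v(\beta)$, using $\{\mu\in E^*v:s(\mu)=w\}=wE^*v$; and $\pi_v(\delta_v)$ picks out only the length-zero path, so $\varphi_{\beta,v}(\delta_v)=Z_v(\beta)^{-1}$. If $wE^*v=\emptyset$ the sum $Z_{w,v}(\beta)$ is empty, whence $\varphi_{\beta,v}(p_w)=0$; as a consistency check $\sum_{w\in E^0}\varphi_{\beta,v}(p_w)=Z_v(\beta)^{-1}\sum_w Z_{w,v}(\beta)=1$, in accordance with $\sum_w p_w=1$. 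The computation is thus trivial; the only genuinely non-formal point is the identification in the previous paragraph that the Gibbs state on $\ell^2(E^*v)$ is the state indexed by $v$ in Theorem \ref{thm:typeI} (this is where the irreducibility/type argument and the fact that $\delta_v$ detects $v$ are needed), together with getting the convention right. One can alternatively avoid the representation and argue from the KMS relation alone — $\varphi_{\beta,v}(s_es_e^*)=e^{-\beta}\varphi_{\beta,v}(p_{r(e)})$ gives $((I-e^{-\beta}A_E)m)_w=\varphi_{\beta,v}(\delta_w)$ with $m_u:=\varphi_{\beta,v}(p_u)$, and one inverts on $S_{\{v\}}=s(E^*v)$, where $I-e^{-\beta}A$ is invertible because $\beta>\beta_v$ forces every component meeting $S_{\{v\}}$ to have spectral radius below $e^{\beta}$ — but this route still needs the support and non-degeneracy information supplied by the classification, so I would keep it only as a cross-check.
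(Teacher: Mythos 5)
Your proof is correct. Note, however, that the paper does not prove this proposition at all: it is quoted verbatim as \cite[Corollary 5.2]{aHLRS15}, so there is no internal argument to compare against. What you have written is essentially a self-contained reconstruction of the argument in the cited source: the path-space (Gibbs) representation on $\ell^2(E^*v)$, with $s_e$ prepending edges, the length operator $N$ generating the gauge action, and $\varphi_{\beta,v}=Z_v(\beta)^{-1}\mathrm{Tr}(e^{-\beta N}\pi_v(\cdot))$, after which both formulas are one-line trace computations. You correctly handled the convention issue (paths \emph{ending} at $v$, matching $s_e^*s_e=p_{r(e)}$ and $Z_v(\beta)=\sum_{\nu\in E^*v}e^{-\beta|\nu|}$), and you rightly identified the one genuinely non-formal step: matching your Gibbs state with the state labelled by $v$ in Theorem \ref{thm:typeI}. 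Your resolution — irreducibility of $\pi_v$ via the compacts $\theta_{\xi_\mu,\xi_\nu}=\pi_v(s_\mu\delta_v s_\nu^*)$, faithfulness of $\rho_v$ to get a type I factor in the GNS representation, and the fact that $\delta_v$ separates the states — is exactly how the correspondence in \cite{aHLRS15} is set up, so the identification is legitimate and not circular. The alternative route you sketch (solving $(I-e^{-\beta}A_E)m=[\varphi(\delta_w)]_w$ on $s(E^*v)$) is also sound and is closer in spirit to how the paper manipulates harmonic vectors elsewhere (cf.\ Theorem \ref{thm:aHLRS15} and Proposition \ref{prop:GeneralLimit}), but as you say it presupposes the support information, so keeping it as a cross-check is the right call.
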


\subsection{KMS states of type III and minimal components}

The Toeplitz algebra of a finite graph $E$ has a groupoid model $\cG_E$ with the base space $\cG_E^{(0)}=E^{\leq \infty}$ (see \cite{Pat}). It is shown in \cite[Proposition 2.1]{aHLRS13} that any KMS$_\beta$ state on $\cT C^*(E) \cong C^*(\cG_E)$ factors through the canonical expectation $C^*(\cG_E) \to C(E^{\leq \infty})$, which implies that there is an isomorphism of simplices between the simplex of KMS$_\beta$ states of $(\cT C^*(E),\gauge)$ and the $e^\beta$-conformal measures on $E^{\leq \infty}$. We say that a KMS$_\beta$ state $\varphi$ of $(\cT C^*(E),\gauge)$ is \emph{supported in $E^\infty$} if the corresponding meausre is supported in $E^\infty \subset E^{\leq \infty}$. Note that $\varphi$ is supported in $E^\infty$ if and only if $\varphi(\delta_v)=0$ for all $v \in E^0$.  
In this subsection, we summarize classification results of KMS states supported in $E^\infty$. 

For a finite graph $E$, a nonnegative vector $\harm = [\harm_v]_{v \in E^0} \in \bR^{E^0}$ is called a \emph{$\beta$-harmonic vector} if $\harm_v \geq 0$ for any $v \in E^0$ and $A_E\harm = e^\beta \harm$. The next theorem is due to Thomsen \cite{Th17}, which is a fundamental tool of the investigation of type III KMS states of $\cT C^*(E)$. 
\begin{thm}[{\cite[Theorem 2.7]{Th17}}] \label{thm:Thomsen}
Let $E$ be a finite graph. The cone of KMS$_\beta$ positive linear functionals of $(\cT C^*(E),\gauge)$ supported in $E^\infty$ is isomorphic to the cone of $\beta$-harmonic vectors via the map 
\[ \varphi \mapsto [\varphi(p_v)]_{v \in E^0}. \]
\end{thm}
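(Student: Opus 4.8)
The plan is to factor the map $\varphi\mapsto[\varphi(p_v)]_{v\in E^0}$ through the correspondence, recalled just above, between KMS$_\beta$ states of $(\cT C^*(E),\gauge)$ and $e^\beta$-conformal probability measures on $E^{\leq\infty}$. Since the KMS condition is linear and $e^\beta$-conformality is scale-invariant, every nonzero KMS$_\beta$ positive linear functional $\varphi$ equals $\varphi(1)=\sum_{v\in E^0}\varphi(p_v)$ times the KMS$_\beta$ state $\varphi/\varphi(1)$, so that correspondence extends to an isomorphism of cones $\varphi\mapsto\mu_\varphi$ between KMS$_\beta$ positive linear functionals of $(\cT C^*(E),\gauge)$ and finite $e^\beta$-conformal measures on $E^{\leq\infty}$, under which ``supported in $E^\infty$'' corresponds to $\mu_\varphi$ being carried by $E^\infty$. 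For $\mu\in E^*$ write $[\mu]:=\{x\in E^{\leq\infty}\mid x\text{ extends }\mu\}$; then $\mu_\varphi([\mu])=\varphi(s_\mu s_\mu^*)$, and in particular $\varphi(p_v)=\mu_\varphi([v])$. It therefore suffices to show that $\mu\mapsto[\mu([v])]_{v\in E^0}$ is an isomorphism of cones from the $e^\beta$-conformal measures on $E^{\leq\infty}$ carried by $E^\infty$ onto the cone of $\beta$-harmonic vectors; as this map is patently additive and positively homogeneous, only its well-definedness and bijectivity are at issue, and the theorem then follows by composition.

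\emph{Well-definedness.} Let $\mu$ be such a measure and put $\harm_v:=\mu([v])\geq0$. The cylinder $[v]$ is the disjoint union of the singleton $\{v\}$ and the cylinders $[e]$ over the edges $e\in vE^1$; the singleton is $\mu$-null because $\mu$ is carried by $E^\infty$, while $e^\beta$-conformality gives $\mu([e])=e^{-\beta}\mu([r(e)])=e^{-\beta}\harm_{r(e)}$. Summing over $e\in vE^1$ yields $\harm_v=e^{-\beta}\sum_{e\in vE^1}\harm_{r(e)}=e^{-\beta}(A_E\harm)_v$, that is, $A_E\harm=e^\beta\harm$, so $\harm$ is $\beta$-harmonic. (One can run the same computation directly in $\cT C^*(E)$: apply $\varphi$ to $p_v=\sum_{e\in vE^1}s_es_e^*+\delta_v$ and use $\varphi(\delta_v)=0$ and the KMS identity $\varphi(s_es_e^*)=e^{-\beta}\varphi(s_e^*s_e)=e^{-\beta}\varphi(p_{r(e)})$.)

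\emph{Injectivity and surjectivity.} Injectivity is immediate: a finite Borel measure on $E^{\leq\infty}$ is determined by its values on the cylinders $[\mu]$, $\mu\in E^*$, and $e^\beta$-conformality forces $\mu([\mu])=e^{-\beta|\mu|}\mu([r(\mu)])=e^{-\beta|\mu|}\harm_{r(\mu)}$, a function of $\harm$ alone. For surjectivity, fix a $\beta$-harmonic vector $\harm$. The space $E^{\leq\infty}$ has a base of clopen sets of the form $[\mu]\setminus\bigcup_{e\in F}[\mu e]$ with $\mu\in E^*$ and $F\subseteq r(\mu)E^1$ (necessarily finite), and I would assign to such a set the value $e^{-\beta|\mu|}\harm_{r(\mu)}-\sum_{e\in F}e^{-\beta(|\mu|+1)}\harm_{r(e)}$. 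Using $A_E\harm=e^\beta\harm$ together with $\harm\geq0$, one checks that this assignment is nonnegative and finitely additive on the Boolean algebra of clopen subsets of $E^{\leq\infty}$ and that every singleton $\{\mu\}$, $\mu\in E^*$, receives the value $0$. Since $E^{\leq\infty}$ is compact, finite additivity on its algebra of clopen sets is automatically countable additivity, so Carathéodory's extension theorem produces a finite Borel measure $\mu_\harm$ on $E^{\leq\infty}$. As $E^*$ is countable and each of its points is $\mu_\harm$-null, $\mu_\harm$ is carried by $E^\infty$; the identity $\mu_\harm([\mu\gamma])=e^{-\beta|\mu|}\mu_\harm([\gamma])$ for composable $\mu,\gamma$, read off the defining formula, shows $\mu_\harm$ is $e^\beta$-conformal; and $\mu_\harm([v])=\harm_v$. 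Hence $\harm$ lies in the image, and the proof is complete.

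The substantive step is surjectivity — promoting the harmonic identity to a bona fide $e^\beta$-conformal measure that is concentrated on $E^\infty$ — but this is a routine measure-extension argument once the cylinder-set bookkeeping is arranged, so I anticipate no genuine obstacle. The only other points calling for a little care are the density of the norm-closed linear span of $\{s_\mu s_\nu^*\mid\mu,\nu\in E^*,\ r(\mu)=r(\nu)\}$ in $\cT C^*(E)$ and the $\gauge$-invariance of KMS states, both of which enter implicitly through the conformal-measure correspondence (or directly, should one prefer to bypass it).
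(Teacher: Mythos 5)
The paper does not prove this statement at all: it is imported verbatim from Thomsen \cite{Th17}*{Theorem 2.7}, and the only ingredient the paper supplies on its own is the remark (quoted from \cite{aHLRS13}*{Proposition 2.1}) that KMS$_\beta$ states factor through the expectation onto $C(E^{\leq\infty})$ and correspond to $e^\beta$-conformal measures. Your argument is a correct self-contained reconstruction built on exactly that correspondence: the forward direction (a conformal measure carried by $E^\infty$ yields a $\beta$-harmonic vector) is the same computation the paper itself performs inside Proposition \ref{prop:GeneralLimit}, the injectivity argument via cylinder sets is sound, and the surjectivity step --- defining a finitely additive set function on the clopen algebra from a harmonic vector, checking nonnegativity from $\harm_{r(\mu)}\geq e^{-\beta}\sum_{e\in F}\harm_{r(e)}$, and using compactness to upgrade to a Borel measure --- is the standard construction and works as you describe. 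The one place where you lean on an external black box is in converting the conformal measure back into a KMS positive linear functional: strictly speaking one must verify that integrating the expectation against an $e^\beta$-conformal measure satisfies the KMS condition on all of $\cT C^*(E)$, not just on the diagonal; you acknowledge this enters ``implicitly through the conformal-measure correspondence,'' and since the paper itself invokes that correspondence as an isomorphism of simplices, this is a legitimate citation rather than a gap. In short: no error, but whereas the paper treats the whole theorem as a quotation, you have effectively re-derived Thomsen's result from the groupoid/measure picture, at the cost of deferring the routine finite-additivity bookkeeping and the KMS verification for conformal measures.
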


Under the isomorphism in Theorem \ref{thm:Thomsen}, KMS states correspond to $\beta$-harmonic vectors $\harm \in \bR^{E^0}$ satisfying that $\|\harm\|_1=1$. For a vector $\harm \in \bR^{E^0}$ and a subset $S \subset E^0$, let $\harm_S := [\harm_v]_{v \in S} \in \bR^S$ be the restriction of $\harm$ to $S$. 

The next definition is introduced in \cite{BT1}. 

\begin{dfn}[{\cite[Definition 5.12]{BT1}}]
Let $E$ be a finite graph. 
A component $C \in \pi(E)$ is called \emph{minimal} if $\rho(A_{C'}) < \rho(A_C)$ holds for any $C' \in \pi(E)$ satisfying $C' \neq C$ and $C'E^*C \neq \emptyset$. The set of all minimal components $C$ with $\rho(A_C)>1$ is denoted by $\pmc(E)$. 
\end{dfn}

The classification of KMS states concentrated in $E^\infty$ is due to an Huef--Laca--Raeburn--Sims \cite[Theorem 5.3]{aHLRS15}.  
Here, we need a technical variant of their results. 
The next theorem follows from \cite[Theorem 4.3]{aHLRS15} and its proof. We also include a direct self-contained proof of the next theorem in Appendix \ref{sec:appendix} for reader's covenience. 
For a finite graph $E$ and $C \in \pmc(E)$, let $\beta_C = \log \rho(A_C)$. 

\begin{thm}\label{thm:aHLRS15}
Let $E$ be a finite graph. 
Let $\beta>0$, and let
\[ K = \bigcup \{C \in \pmc(E) \colon \beta_C = \beta\}. \]
Let $\harm^0 \in \bR^K$ be a $\beta$-harmonic vector of $E_K$. Then, the following hold: 
\begin{enumerate}
\item For any $\beta$-harmonic vector $\harm \in \bR^{E^0}$ of $E$, the restriction $\harm_K \in \bR^K$ is also a $\beta$-harmonic vector of $E_K$. \label{item:restriction}
\item The $\beta$-harmonic vector $\harm^0$ uniquely extends to a $\beta$-harmonic vector $\harm \in \bR^{E^0}$ of $E$. \label{item:uniqueness}
\item The $\beta$-harmonic vector $\harm$ in \eqref{item:uniqueness} is determined by 
\[ 
\begin{cases}
\harm_K = \harm^0, \\
\harm_D = (e^\beta I - A_{D})^{-1} A_{D, K} \harm^0, \\
\harm_{E^0\setminus S_K}=0, 
\end{cases}
\]
where $D = S_K \setminus K$. \label{item:extension}
\end{enumerate}
\end{thm}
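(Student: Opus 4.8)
The plan is to verify the three claims in the order \eqref{item:restriction} $\Rightarrow$ \eqref{item:uniqueness} $\Rightarrow$ \eqref{item:extension}, using the block structure of $A_E$ with respect to the partition $E^0 = K \sqcup D \sqcup (E^0 \setminus S_K)$, where $D = S_K \setminus K$. The key structural observations are: there are no edges into $K$ from outside $K$ (since $K$ consists of minimal components of maximal spectral radius $e^\beta$, any component $C'$ with $C' E^* K \ne \emptyset$ and $C' \ne C$ for the relevant $C \in K$ has $\rho(A_{C'}) < e^\beta$, so $C' \not\subset K$ — hence $A_{K, E^0 \setminus K} = 0$ in the sense that the only edges terminating in $K$ start in $K$); and there are no edges from $E^0 \setminus S_K$ into $S_K$ (by definition $S_K = s(E^* K)$, so if $s(e) \notin S_K$ then $r(e) \notin S_K$, giving $A_{E^0 \setminus S_K,\, S_K} = 0$). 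Writing $A_E$ in $3\times 3$ block lower-triangular form (after ordering $K, D, E^0\setminus S_K$ appropriately, using that edges into $K$ come only from $K$ and edges into $D$ come only from $K \cup D$), the harmonic equation $A_E \harm = e^\beta \harm$ decouples.

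For \eqref{item:restriction}: the $K$-row of $A_E \harm = e^\beta \harm$ reads $A_{K,K}\harm_K + A_{K, E^0\setminus K}\harm_{E^0 \setminus K} = e^\beta \harm_K$, and since $A_{K, E^0 \setminus K} = 0$ this is exactly $A_K \harm_K = e^\beta \harm_K$, i.e. $\harm_K$ is $\beta$-harmonic for $E_K$. For \eqref{item:extension} (which I would prove before uniqueness, since uniqueness follows from it): given $\harm$ extending $\harm^0$, first I show $\harm_{E^0 \setminus S_K} = 0$. The restriction $\harm_{E^0 \setminus S_K}$ satisfies, from the corresponding row block, $A_{E^0 \setminus S_K,\, S_K}\harm_{S_K} + A_{E^0 \setminus S_K,\, E^0 \setminus S_K}\harm_{E^0 \setminus S_K} = e^\beta \harm_{E^0 \setminus S_K}$; the first term vanishes, so $\harm_{E^0 \setminus S_K}$ is a nonnegative $\beta$-harmonic vector of $E_{E^0 \setminus S_K}$. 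But no component $C$ of $E_{E^0 \setminus S_K}$ can have $\rho(A_C) \ge e^\beta$: such a $C$ would contain a minimal component $C''$ of $E$ (restricting to the relevant sub-DAG and descending to a minimal one) with $\rho(A_{C''}) \ge e^\beta$ and $C'' \in \pmc(E)$ with $\beta_{C''} \ge \beta$, hence $\beta_{C''} = \beta$ by maximality of the spectral radii in $K$, forcing $C'' \subset K \subset S_K$, a contradiction. Therefore $\rho(A_{E^0\setminus S_K}) < e^\beta$, so $(e^\beta I - A_{E^0 \setminus S_K})$ is invertible with nonnegative inverse, and the only nonnegative fixed vector is $0$; this gives $\harm_{E^0\setminus S_K} = 0$. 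Then the $D$-row reads $A_{D,K}\harm_K + A_{D,D}\harm_D + A_{D, E^0 \setminus S_K}\harm_{E^0\setminus S_K} = e^\beta \harm_D$; the last term is now $0$, so $(e^\beta I - A_D)\harm_D = A_{D,K}\harm^0$. Here $\rho(A_D) < e^\beta$ for the same reason ($D \subset S_K \setminus K$ contains no component of spectral radius $\ge e^\beta$), so $(e^\beta I - A_D)$ is invertible and $\harm_D = (e^\beta I - A_D)^{-1} A_{D,K}\harm^0$, which is nonnegative since the Neumann series $\sum_n e^{-(n+1)\beta} A_D^n$ has nonnegative entries. This establishes the formula in \eqref{item:extension} and shows the extension is unique (every block is forced), giving \eqref{item:uniqueness}. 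Conversely, one must check that the vector $\harm$ defined by these formulas genuinely is $\beta$-harmonic on all of $E^0$ and nonnegative: nonnegativity is clear from the above, and the harmonic identity in each of the three row-blocks is a direct back-substitution using $A_K \harm^0 = e^\beta \harm^0$ and the defining equation for $\harm_D$.

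The main obstacle I anticipate is the spectral-radius comparison argument used twice above — namely that neither $E_D$ nor $E_{E^0 \setminus S_K}$ has a component with spectral radius $\ge e^\beta$. This requires knowing that a component $C$ of a restriction $E_S$ is also a component of $E$ with the same adjacency matrix (true: $C$ is an equivalence class under $\sim$ in $E_S$, and since $\sim_{E_S}$ refines $\sim_E$ but within $C$ all vertices are $E_S$-equivalent hence $E$-equivalent, $C$ is contained in a single $E$-component $\tilde C$; one then argues $C = \tilde C$ using that $S = E^0 \setminus S_K$ or $S = S_K$ is "downward/upward closed" appropriately so no path leaves and returns), and that if a component has spectral radius $\ge e^\beta$ then following maximal descending paths in the DAG of components one reaches a minimal component with spectral radius $\ge e^\beta$ — which by the maximality built into the definition of $K$ must lie in $K$. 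I would isolate this as a small lemma on components of restrictions before the main argument. The rest is linear-algebra bookkeeping with block-triangular matrices and Neumann series, which is routine.
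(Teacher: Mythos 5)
Your overall architecture (establish $\harm_{E^0\setminus S_K}=0$ first, then solve for $\harm_D$ by a Neumann series, then deduce \eqref{item:restriction} and uniqueness) matches the paper's, but there is a genuine gap at the first and most important step, and it is exactly the step where the paper must invoke the subinvariance theorem. You claim that no component of $E_{E^0\setminus S_K}$ has spectral radius $\geq e^\beta$, ``by maximality of the spectral radii in $K$.'' No such maximality is built into the definition of $K$: it collects only the \emph{minimal} components whose spectral radius is \emph{exactly} $e^\beta$, and $E^0\setminus S_K$ may well contain components with $\rho(A_C)>e^\beta$ (e.g.\ a minimal component with a larger critical inverse temperature elsewhere in the graph) or non-minimal components with $\rho(A_C)=e^\beta$ (e.g.\ a component of radius $e^\beta$ lying downstream of $K$ with no path back to $K$). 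In the latter case $e^\beta\in\Sp(A_{E^0\setminus S_K})$, so $e^\beta I-A_{E^0\setminus S_K}$ is not even invertible and your argument collapses; in the former, invertibility may hold but the inverse is not given by a convergent Neumann series, so nonnegativity buys you nothing. What actually kills $\harm$ on these components is the subinvariance theorem \cite[Theorem 1.6]{Sen}: from $e^\beta\harm_C\geq A_C\harm_C$ one gets $\harm_C=0$ whenever $\rho(A_C)>e^\beta$, and a second application (comparing a radius-$e^\beta$ component with the components upstream of it) gives $\harm_C=0$ when $\rho(A_C)=e^\beta$ but $C\notin\pmc(E)$. This is Lemma \ref{lem:subinvariance} and is the heart of the proof; only after it is in place is $\harm$ supported on $K$ together with components of radius $<e^\beta$, where your invertibility argument applies.

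A second, more local problem: your block-structure claims about $K$ have the edge directions confused. With the convention $(A_E)_{v,w}=|vE^1w|$, the row of $A_E\harm=e^\beta\harm$ at $v\in K$ involves the edges \emph{leaving} $v$, and $A_{K,E^0\setminus K}$ is certainly not zero (minimal components routinely emit edges to downstream vertices; both examples in Section \ref{sec:example} do this), nor is $A_{E^0\setminus K,K}$ (components of smaller radius may feed into $K$; minimality permits exactly this). What is true, and what you need, is $A_{K,\,S_K\setminus K}=0$ (an edge from $C\subset K$ into $S_K\setminus K$ would yield either a path returning to $C$, contradicting $w\notin K$, or a path from $C$ to another component of $K$, contradicting that component's minimality) together with $A_{E^0\setminus S_K,\,S_K}=0$. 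Consequently claim \eqref{item:restriction} needs $\harm_{E^0\setminus S_K}=0$ as an input and cannot be dispatched first as you propose. Your treatment of $D=S_K\setminus K$ itself --- $\rho(A_D)<e^\beta$ via minimality, the Neumann series, uniqueness, and the converse back-substitution --- is correct.
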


Combining \eqref{item:restriction} and \eqref{item:uniqueness} of Theorem \ref{thm:aHLRS15}, the map $\harm^0 \mapsto \harm$ induces an affine isomorphism between the simplex of $\beta$-harmonic vectors $\harm^0$ of $E_K$ with $\|\harm\|_1=1$ and  the simplex of KMS$_\beta$ states of $\cT C^*(E)$ supported in $E^\infty$. 
Let $C_1,\cdots,C_n$ be the listing of minimal components of $E$ with $\beta_{C_i}=\beta$. Then, we have
\begin{equation} \label{eqn:diagonal} 
A_K = \begin{bmatrix} A_{C_1} & & \\ & \ddots & \\ & & A_{C_n}\end{bmatrix}. 
\end{equation}
In addition, by the Perron--Frobenius theorem (see \cite[Theorem 1.5]{Sen}), $e^\beta$ is a simple eigenvalue of $A_{C_i}$ for each $i=1,\cdots,n$. Hence, the extremal points of the simplex of $\beta$-harmonic vectors $\harm^0$ of $E_K$ with $\|\harm\|_1=1$ coincides with the Perron--Frobenius eigenvectors of $A_{C_i}$ for some $i$ satisfying the appropriate normalisation. Therefore, there is a one-to-one correspondence between $\pmc(E)$ and the union of the set of extremal KMS$_\beta$ states of $\cT C^*(E)$ supported in $E^\infty$, where the union is taken for all $\beta>0$. This consequence is essentially the same as \cite[Theorem 5.3 (c)]{aHLRS15} (see also \cite[Theorem 5.16]{BT1}). 

\begin{dfn}
Let $E$ be a finite graph. 
For $C \in \pmc(E)$, let $\psi_C$ denote the extremal KMS$_{\beta_C}$ state corresponding to $C$ from Theorem \ref{thm:aHLRS15}. 
\end{dfn}

Concretely, the KMS state $\psi_C$ is obtained as follows: we first apply Theorem \ref{thm:aHLRS15} to the $\beta$-harmonic vector $\tilde{\harm}^0$ of $E_K$ satisfying that $\tilde{\harm}_C^0$ is a Perron--Frobenius eigenvector of $A_C$ and $\tilde{\harm}_{K\setminus C}^0 =0$, so that we extend $\tilde{\harm}^0$ to a $\beta$-harmonic vector $\tilde{\harm}$ of $E$. Then, $\psi_C$ is the KMS state of $\cT C^*(E)$ corresponding to the $\beta$-harmonic vector $\harm = \tilde{\harm}/\| \tilde{\harm} \|_1$ of $E$ from Theorem \ref{thm:Thomsen}. The KMS state $\psi_C$ is known to be type III by \cite[Theorem 5.17]{BT1}, and the set of all extremal type III KMS states of $\cT C^*(E)$ coincides with $\{\psi_C \colon C \in \pmc(E)\}$. Hence, we sometimes refer to $\psi_C$ as extremal type III KMS states instead of referring to them as extremal KMS states supported in $E^\infty$. 

Finally, the next definition will be used to describe the relation between vertices and components.  
\begin{dfn}
Let $E$ be a finite graph, and let $v \in E^0$ be a vertex with $\beta_v>0$. Define 
\[ \crit{v}(E) = \{ C \in \pmc(E) \colon CE^*v \neq \emptyset,\ \beta_C = \beta_v\}. \]
\end{dfn}

\section{Main Results} \label{sec:main}

\subsection{General facts of KMS states of $\cT C^*(E)$} \label{ssec:decomp}
We begin with making several observations on convex decompositions of KMS states of $\cT C^*(E)$ and limits of KMS states of $\cT C^*(E)$. 
First, we establish a way to calculate the coefficients in the convex decomposition of a KMS state from the values of vertex projections belonging to minimal components. 

\begin{prop} \label{prop:coefficients}
Let $E$ be a finite graph, and let $\varphi$ be a KMS$_\beta$ state of $\cT C^*(E)$ supported in $E^\infty$. 
Let $K \subset E^0$ be the subset from Theorem \ref{thm:aHLRS15}. 
For each $C \in \pi(E_K)$, 
define $\harm^C = [\harm^C_w]_{w \in S_C} \in \bR^{S_C}$ by 
\[ \harm_w^C = \varphi(p_w) \mbox{ for } w \in C,\ 
\harm_{D}^C = (e^\beta I - A_{D})^{-1} A_{D, C} \harm_C^C, \]
where $D=S_C \setminus C$. 
Then, we have 
\[ \varphi = \sum_{C \in \pi(E_K)} \| \harm^C\|_1 \psi_C. \] 
\end{prop}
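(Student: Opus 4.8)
The plan is to reduce the statement to the uniqueness part of Theorem~\ref{thm:aHLRS15} applied componentwise. Write $\harm = [\varphi(p_v)]_{v \in E^0}$, which is a $\beta$-harmonic vector of $E$ by Theorem~\ref{thm:Thomsen} (with $\|\harm\|_1 = 1$ since $\varphi$ is a state supported in $E^\infty$). By Theorem~\ref{thm:aHLRS15}\eqref{item:restriction}, the restriction $\harm_K$ is a $\beta$-harmonic vector of $E_K$. Using the block-diagonal form~\eqref{eqn:diagonal} of $A_K$ and the Perron--Frobenius theorem, $\harm_K$ decomposes uniquely as $\harm_K = \sum_{C \in \pi(E_K)} \harm_C$ where each $\harm_C$ (extended by zero off $C$) is a nonnegative multiple $t_C$ of the Perron--Frobenius eigenvector of $A_C$; here $\harm_C = [\varphi(p_w)]_{w\in C}$ and $t_C = \|\harm_C\|_1$. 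Note that only components $C$ with $\beta_C = \beta$ appear, i.e. $C \in \pmc(E)$ with $\beta_C = \beta$ — exactly the components for which $\psi_C$ is defined.

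Next I would apply the extension formula Theorem~\ref{thm:aHLRS15}\eqref{item:extension} to each $\tilde\harm^0_C := \harm_C$ (a $\beta$-harmonic vector of $E_K$ supported on the single component $C$), obtaining its unique $\beta$-harmonic extension $\tilde\harm^C$ to $E^0$; by definition of $\psi_C$ (up to the normalisation $t_C$), this extension is $t_C \cdot (\text{harmonic vector of }\psi_C)$, and in particular $\psi_C = \tilde\harm^C/\|\tilde\harm^C\|_1$ with $\|\tilde\harm^C\|_1 = \|\harm^C\|_1$ in the notation of the proposition — here I should check that the vector $\harm^C$ defined in the statement agrees with the restriction of $\tilde\harm^C$ to $S_C$: on $C$ it is $\varphi(p_w) = (\harm_C)_w$ by fiat, on $D = S_C\setminus C$ it is $(e^\beta I - A_D)^{-1}A_{D,C}\harm_C^C$ which is precisely the formula in \eqref{item:extension} (note $S_K \setminus K$ restricted to the $C$-part is $S_C\setminus C$), and off $S_C$ both are zero. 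So $\|\harm^C\|_1 = t_C$.

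Now consider the vector $\harm' := \sum_{C\in\pi(E_K)} \tilde\harm^C \in \bR^{E^0}$. It is a sum of $\beta$-harmonic vectors of $E$, hence $\beta$-harmonic; its restriction to $K$ equals $\sum_C \harm_C = \harm_K$; and it vanishes on $E^0 \setminus S_K$. Both $\harm$ and $\harm'$ are $\beta$-harmonic extensions of the same vector $\harm_K$ on $E_K$, so by the uniqueness in Theorem~\ref{thm:aHLRS15}\eqref{item:uniqueness} we get $\harm = \harm'$. Translating back through the affine isomorphism of Theorem~\ref{thm:Thomsen}, $\varphi = \sum_{C\in\pi(E_K)} \|\harm^C\|_1\,\psi_C$, as claimed.

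The only genuinely delicate point is the bookkeeping that the piecewise extension formula applied to each $\harm_C$ individually and then summed reproduces the global extension formula — i.e. that $D = S_K\setminus K$ decomposes compatibly as a disjoint union controlled by the individual $S_C\setminus C$, and that the resolvent $(e^\beta I - A_D)^{-1}$ interacts correctly with this decomposition. This is where one must be careful: a vertex in $S_K\setminus K$ may reach several minimal components, so $S_C\setminus C$ for different $C$ need not be disjoint, and the identity $\harm_D = \sum_C (\text{extension of }\harm_C \text{ to } S_C\setminus C)$ on the overlap has to be argued via the uniqueness statement rather than by a naive partition. But since I am invoking Theorem~\ref{thm:aHLRS15}\eqref{item:uniqueness} for the final identification $\harm = \harm'$ rather than comparing the formulas termwise, this obstacle dissolves: I only need each $\tilde\harm^C$ to be $\beta$-harmonic on all of $E$ (which \eqref{item:extension} guarantees) and to agree with $\harm^C$ on $S_C$; linearity of $A_E$ does the rest.
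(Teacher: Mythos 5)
Your argument follows the same route as the paper's own proof: restrict $[\varphi(p_w)]_{w}$ to $K$, split it along the block-diagonal decomposition \eqref{eqn:diagonal} into pieces supported on single components, extend each piece to a $\beta$-harmonic vector of $E$ by Theorem \ref{thm:aHLRS15}, and identify the sum of the extensions with the original harmonic vector via the uniqueness statement. That skeleton is correct. The one place where you are too quick is the claim that the delicate bookkeeping ``dissolves'' because you invoke uniqueness at the end. It does not: what your argument delivers is $\varphi=\sum_C\|\tilde\harm^C\|_1\,\psi_C$, and to turn this into the statement of the proposition you still must show $\|\tilde\harm^C\|_1=\|\harm^C\|_1$, i.e.\ that the extension $\tilde\harm^C$ produced by Theorem \ref{thm:aHLRS15}\eqref{item:extension} --- whose off-$K$ part is computed with the \emph{global} resolvent $(e^\beta I-A_{S_K\setminus K})^{-1}A_{S_K\setminus K,K}$ --- coincides on $S_C\setminus C$ with the \emph{local} expression $(e^\beta I-A_{S_C\setminus C})^{-1}A_{S_C\setminus C,C}\harm^C_C$ defining $\harm^C$, and vanishes on $(S_K\setminus K)\setminus S_C$. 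This is not ``precisely the formula in \eqref{item:extension}''; it needs an argument, and it is exactly the termwise comparison you said you could avoid.

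The paper supplies this step by setting $D=S_C\setminus C$, $D_1=(S_K\setminus S_C)\setminus(K\setminus C)$, and noting that there are no edges from $D_1$ into $D$ or into $C$ (any such edge would place a vertex of $D_1$ in $S_C$), so that $A_{S_K\setminus K}$ and $A_{S_K\setminus K,K}$ acquire a triangular block structure under which the global resolvent applied to a vector supported on $C$ collapses to the local one. Equivalently: every path inside $S_K\setminus K$ from a vertex of $S_C\setminus C$ to $C$ stays inside $S_C\setminus C$ (here one also uses $S_C\cap K=C$, a consequence of the minimality of $C$), and no vertex of $D_1$ admits any path to $C$. Once you insert this short verification, your proof is complete and is essentially the paper's.
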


\begin{proof}
First, note that $\pi(E_K)=\{C \in \pmc(E) \colon \beta_C=\beta\}$. 
By Theorem \ref{thm:aHLRS15} \eqref{item:restriction} combined with Theorem \ref{thm:Thomsen}, the vector $[\varphi(p_w)]_{w \in K} \in \bR^K$ is a $\beta$-harmonic vector of $E_K$. For each $C$, the vector $\harm^{0,C}$ defined by 
\begin{equation} \label{eqn:harmforC} 
\harm^{0,C}_w = \begin{cases} \varphi(p_w) & \mbox{if } w \in C \\ 0 & \mbox{if } w \in K \setminus C, \end{cases} 
\end{equation}
is also a $\beta$-harmonic vector of $E_K$, since $A_K$ is a block diagonal matrix as in Equation \eqref{eqn:diagonal}.   

For each $C \in \pmc(E_K)$, apply Theorem \ref{thm:aHLRS15} \eqref{item:uniqueness} to the $\beta$-harmonic vector $\harm^{0,C}$ of $E_K$ defined in Equation \eqref{eqn:harmforC} 
to obtain an extension of $\harm^{0,C}$ to a $\beta$-harmonic vector of $E$, which is also denoted by $\harm^{0,C}$, and a KMS$_\beta$ positive linear functional $\varphi_C$ supported in $E^\infty$ corresponding to $\harm^{0,C}$. 
Then, we have 
\begin{equation} \label{eqn:DecompIntermed}
 \varphi = \sum_{C \in \pi(E_K)} \varphi_C 
 \end{equation}
by the uniqueness in Theorem \ref{thm:aHLRS15} \eqref{item:uniqueness}, 
since the values of $p_w$ in both the left and right hand side are the same for all $w \in K$. 

Moreover, for each $C \in \pi(E_K)$, we have 
\begin{equation} \label{eqn:haRmonic} 
\begin{cases}
\varphi_C(p_w) = \harm^C_w & \mbox{ for } w \in S_C, \\
\varphi_C(p_w)=0 & \mbox{ for } w \in E^0\setminus S_C.
\end{cases}
\end{equation}
In order to see this, let $D_1=(S_K \setminus S_C) \setminus (K \setminus C)$, and let $D_0 = D_1 \cup D = S_K \setminus K$. Then, since there are no paths between $D_1$ and $D$, and no paths from $D_1$ to $C$, we have 
\[ A_{D_0,K} = \begin{bmatrix} A_{D,C} & A_{D,K\setminus C} \\ 0 & A_{D_1, K\setminus C} \end{bmatrix},\ 
A_{D_0} = \begin{bmatrix} A_{D} & 0 \\ 0 & A_{D_1} \end{bmatrix}. \]
Hence, by Theorem \ref{thm:aHLRS15} \eqref{item:extension}, we have
\[ \harm_{D_0}^{0,C} = (e^{\beta}I - A_{D_0})^{-1}A_{D_0,K}\harm_K^{0,C} 
= \begin{bmatrix} (e^\beta I - A_D)^{-1} A_{D,C} \harm_C^{0,C} \\ 0 \end{bmatrix}
= \begin{bmatrix} \harm_D^C \\ 0 \end{bmatrix}, \]
which implies \eqref{eqn:haRmonic}. 

Equation \eqref{eqn:haRmonic} implies $\varphi_C(1) = \|\harm^C\|_1$. 
By the uniqueness in Theorem \ref{thm:aHLRS15} \eqref{item:uniqueness} and the uniqueness of the Perron--Frobenius eigenvector up to positive constants, we have $\varphi_C = \|\harm^C\|_1 \psi_C$, which completes the proof. 
\end{proof}

\begin{rmk}
For each $w \in S_K \setminus K$, $\harm^C_w$ is not necessarily equal to $\varphi(p_w)$ in Proposition \ref{prop:coefficients}. In fact, from Equation \eqref{eqn:DecompIntermed} implies that 
\begin{equation} \label{eqn:DecompCoeff}
 \sum_{C \in \pi(E_K)} \harm^C_w = \varphi(p_w) 
\end{equation}
for all $w \in E^0$ (we regard $\harm^C_w=0$ if $w \not\in S_C$ here). In other words, the value $\varphi(p_w)$ is distributed to coefficients at some proportion. In addition, we can directly see that $\sum_{C \in \pmc(E)} \|\harm^C\|_1=1$ from Equation \eqref{eqn:DecompCoeff}. 
Moreover, in Proposition \ref{prop:coefficients}, we can see that if $C \in \pi(E_K)$ and $\varphi(p_w)=0$ for some $w \in C$ (hence, for all $w \in C$), then  $\harm^C=0$. 
\end{rmk}

\begin{cor} \label{cor:coefficients}
Let $E$ be a finite graph, and let $v \in E^0$ be a vertex such that $\beta_v>0$. Suppose the weak*-limit $\displaystyle \varphi = \lim_{\beta \to \beta_v+0} \varphi_{\beta,v}$ exists. Then, $\varphi$ is supported in $E^\infty$. For $C \in \crit{v}(E)$, let $\harm^C \in \bR^{S_C}$ be the vector from Proposition \ref{prop:coefficients} applied to $\varphi$. Then, we have 
\[ \varphi = \sum_{C \in \crit{v}(E)} \| \harm^C\|_1 \psi_C. \] 
Moreover, we have  
\[ \harm^C_w = \lim_{\beta \to \beta_v+0} \frac{Z_{w,v}^C(\beta)}{Z_v(\beta)} \]
for any $w \in S_C$. 
\end{cor}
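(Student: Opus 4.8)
The plan is to prove Corollary \ref{cor:coefficients} in three stages: first showing $\varphi$ is supported in $E^\infty$, then identifying which components can appear, then computing the coefficients via Proposition \ref{prop:coefficients}.

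\textbf{Step 1: $\varphi$ is supported in $E^\infty$.} By the remark preceding Theorem \ref{thm:Thomsen}, it suffices to check $\varphi(\delta_w)=0$ for every $w\in E^0$. For $w=v$, Proposition \ref{prop:measure} gives $\varphi_{\beta,v}(\delta_v)=1/Z_v(\beta)$, which tends to $0$ as $\beta\to\beta_v+0$ by Remark \ref{rmk:diverge}. For a general vertex $w$, the idea is that $\delta_w$ is dominated (up to the KMS structure) by $\delta_v$ along paths: more precisely, $\varphi_{\beta,v}(\delta_w)$ should be expressible in terms of $Z_{w,v}(\beta)$ and $Z_w(\beta)$, or one can use that $\varphi_{\beta,v}$ is the type I state concentrated on the orbit of $\delta_v$, so $\varphi_{\beta,v}(\delta_w)=\sum_{\mu\in wE^*v}e^{-\beta|\mu|}\varphi_{\beta,v}(\delta_v)\cdot(\text{finite count bound})$; in any case $\varphi_{\beta,v}(\delta_w)\le C_w/Z_v(\beta)\to 0$, where $C_w$ is bounded because along the limit $\beta\to\beta_v+0$ one stays on compact $\beta$-intervals. (If $\beta_v$ equals the abscissa for $Z_w$ one still uses $Z_v(\beta)\to\infty$ to dominate.) Hence $\varphi(\delta_w)=0$ for all $w$, so $\varphi$ is supported in $E^\infty$.

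\textbf{Step 2: the decomposition involves only $C\in\crit{v}(E)$.} Apply Proposition \ref{prop:coefficients} to $\varphi$: with $K=\bigcup\{C\in\pmc(E):\beta_C=\beta\}$ for $\beta=\beta_v$, we get $\varphi=\sum_{C\in\pi(E_K)}\|\harm^C\|_1\psi_C$. By the final observation in the remark after Proposition \ref{prop:coefficients}, $\harm^C=0$ whenever $\varphi(p_w)=0$ for $w\in C$. By Proposition \ref{prop:measure}, $\varphi_{\beta,v}(p_w)=Z_{w,v}(\beta)/Z_v(\beta)$, which vanishes identically when $wE^*v=\emptyset$; taking the limit, $\varphi(p_w)=0$ for such $w$. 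Since $\pi(E_K)=\{C\in\pmc(E):\beta_C=\beta_v\}$, the surviving components are exactly those with $\beta_C=\beta_v$ and $CE^*v\neq\emptyset$, i.e. $C\in\crit{v}(E)$. This gives $\varphi=\sum_{C\in\crit{v}(E)}\|\harm^C\|_1\psi_C$.

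\textbf{Step 3: the formula for $\harm^C_w$.} By definition of $\harm^C$ in Proposition \ref{prop:coefficients}, $\harm^C_w=\varphi(p_w)=\lim_{\beta\to\beta_v+0}Z_{w,v}(\beta)/Z_v(\beta)$ for $w\in C$; since $v,w\in C$ forces $Z_{w,v}=Z_{w,v}^C$ (noted after the definition of these generating functions), this is the claimed formula on $C$. For $w\in D=S_C\setminus C$, we must show $(e^{\beta_v}I-A_D)^{-1}A_{D,C}\harm^C_C$ has $w$-entry $\lim Z^C_{w,v}(\beta)/Z_v(\beta)$. The approach is to pass the limit inside: for $\beta>\beta_v$ one has the path-counting identity $Z^C_{w,v}(\beta)=\sum_{w'\in C}Z_{w,w'}(\beta)Z_{w',v}(\beta)$, and $[Z_{w,w'}(\beta)]_{w,w'}$ restricted to paths whose intermediate vertices avoid $C$ except at the endpoint $w'$ gives the matrix $(e^{\beta}I-A_D)^{-1}A_{D,C}$ acting appropriately (standard resolvent-as-path-sum, valid since $\rho(A_D)<e^{\beta_v}\le e^\beta$). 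Dividing by $Z_v(\beta)$ and letting $\beta\to\beta_v+0$, the matrix entries converge to $(e^{\beta_v}I-A_D)^{-1}A_{D,C}$ and the vector $[Z_{w',v}(\beta)/Z_v(\beta)]_{w'\in C}$ converges to $\harm^C_C$, yielding the result.

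The main obstacle is \textbf{Step 1}, establishing $\varphi(\delta_w)=0$ for $w\neq v$ with a uniform-in-$\beta$ bound; and, more subtly within Step 3, justifying that $Z_{w,w'}(\beta)$ for $w\in D$, $w'\in C$ admits the resolvent expansion and that termwise limits commute with the (finite) matrix inversion — this is routine given $\rho(A_D)<e^{\beta_v}$, but one should record why the relevant generating functions stay bounded (not just finite) as $\beta\to\beta_v+0$, which again reduces to $\rho(A_D)<e^{\beta_v}$ so $Z_{w,w'}$ is continuous up to and including $\beta_v$.
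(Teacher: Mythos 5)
Your overall route is the same as the paper's: support in $E^\infty$ via Proposition \ref{prop:measure} and Remark \ref{rmk:diverge}, elimination of the components outside $\crit{v}(E)$ via Proposition \ref{prop:coefficients} together with the vanishing of $\varphi(p_w)$ when $wE^*v=\emptyset$, and the resolvent expansion $(e^{\beta}I-A_D)^{-1}A_{D,C}=e^{-\beta}\sum_{k\ge 0}e^{-k\beta}A_D^kA_{D,C}$ (valid since $\rho(A_D)<e^{\beta_v}$ by minimality of $C$) for the formula on $D=S_C\setminus C$. Steps 1 and 2 are essentially the paper's argument; in Step 1 you can replace the vague ``finite count bound'' by an exact identity: the KMS recursion gives $\varphi_{\beta,v}(\delta_w)=\varphi_{\beta,v}(p_w)-e^{-\beta}\sum_{u}|wE^1u|\varphi_{\beta,v}(p_u)$, and since $Z_{w,v}(\beta)=\delta_{w,v}+e^{-\beta}\sum_u|wE^1u|Z_{u,v}(\beta)$, Proposition \ref{prop:measure} yields $\varphi_{\beta,v}(\delta_w)=\delta_{w,v}/Z_v(\beta)\to 0$ by Remark \ref{rmk:diverge}.

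The one genuine error is in Step 3, case $w\in C$: you justify $Z_{w,v}=Z^C_{w,v}$ by invoking ``$v,w\in C$''. But $v$ lies in $C_0$, the component containing $v$, whereas $C\in\crit{v}(E)$ is a \emph{minimal} component admitting a path to $v$; these coincide only when $C_0$ is itself minimal. In Example \ref{example:intermediate}, for instance, $v_1\in C_0=\{v_1,v_2\}$ while $\crit{v_1}(E)=\{C_1,\dots,C_4\}$ with $C_i=\{w_i\}$, so $v\notin C$ there and the remark you cite does not apply. What is actually needed (and what the paper's proof implicitly uses) is that for $w\in C$ every path from $w$ to $v$ already meets $C$ at its source, so $Z^C_{w,v}$ agrees with $Z_{w,v}$ as a generating function of paths passing through $C$; this uses only $w\in C$, with no hypothesis on $v$. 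Relatedly, in the $w\in D$ case you correctly note that the resolvent enumerates only paths whose first visit to $C$ is at the endpoint $w'$; to finish you should match the limit against $Z^C_{w,v}$ using precisely that unique first-entry decomposition of a path through $C$, rather than the unrestricted products $Z_{w,w'}Z_{w',v}$, since the latter count a path once for each vertex of $C$ it visits.
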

\begin{proof}
First, Proposition \ref{prop:measure} combined with Remark \ref{rmk:diverge} implies that $\varphi$ is supported in $E^\infty$. 
Let $K \subset E^0$ be the subset from Theorem \ref{thm:aHLRS15}. 
By Proposition \ref{prop:measure}, we have $\varphi_{\beta,v} (p_w) = 0$ for all $\beta>\beta_v$ and hence $\varphi(p_w)$=0, if $w \in E^0$ belongs to a component $C \in \pi(E_K) \setminus \crit{v}(E)$. Hence, $\harm^C$ in Proposition \ref{prop:coefficients} is zero for all $C \in \pi(E_K) \setminus \crit{v}(E)$, which implies the second claim. Fix $C \in \crit{v}(E)$, and we prove the last claim. For $w \in C$, we have 
\[ \harm^C_w = \varphi(p_w) = \lim_{\beta \to \beta_v+0} \frac{Z_{w,v}(\beta)}{Z_v(\beta)} \]
by Proposition \ref{prop:measure}, which implies that the last claim holds for any $w \in C$. Let $w \in D$, where $D=S_C \setminus C$ as in Proposition \ref{prop:coefficients}. 
By the definition of $\harm^C$, we have 
\[
\harm_{D}^C = (e^{\beta_v} I - A_{D})^{-1} A_{D, C} \harm_C^C 
= \lim_{\beta \to \beta_v+0} e^{-\beta}\sum_{k=0}^\infty e^{-k\beta} A_D^k A_{D,C} \harm_C^C. 
\]
Note that $\rho(A_D) <e^{\beta_v}$ by the minimality of $C$ combined with Equation \eqref{eqn:seneta}. 
Hence, we can see that
\[ \harm_{w} = \lim_{\beta \to \beta_v+0} \sum_{w' \in C} Z_{w,w'}(\beta) \harm_{w'}^C 
= \lim_{\beta \to \beta_v+0} \sum_{w' \in C} \frac{Z_{w,w'}(\beta)Z_{w',v}(\beta)}{Z_v(\beta)}
=\lim_{\beta \to \beta_v+0} \frac{Z_{w,v}^C(\beta)}{Z_v(\beta)}. \]
Therefore, the last claim holds for all $w \in S_C$. 
\end{proof}

The assumption of the existence of the limit in Corollary \ref{cor:coefficients} is in fact always satisfied, which will be proven in Theorem \ref{thm:main}. 
The next proposition allows us to reduce a problem of the convergence of KMS states to that of the convergence in $\bR$-valued functions on $E^0$, which will be used at the final step of the proof of Theorem \ref{thm:main}.  

\begin{prop} \label{prop:GeneralLimit}
Let $\beta_0, \beta_1 \in \bR_+$ with $\beta_1 > \beta_0$. For each $\beta \in (\beta_0,\beta_1]$, let $\varphi_{\beta}$ be a KMS$_\beta$ state of $\cT C^*(E)$. For each $v \in E^0$, let $\harm_v \in [0,1]$. Suppose 
\[ \lim_{\beta \to \beta_0+0} \varphi_\beta(p_v) = \harm_v, \mbox{ and } \lim_{\beta \to \beta_0+0} \varphi_\beta(\delta_v) = 0 \]
for all $v \in E^0$. Then, $\varphi_\beta$ converges to a KMS$_{\beta_0}$ state supported in $E^\infty$ 
as $\beta \to \beta_0+0$. 
\end{prop}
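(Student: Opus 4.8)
The plan is to exploit the structure that every KMS state of $\cT C^*(E)$ splits through the canonical conditional expectation, so that a KMS$_\beta$ state is determined by its values on the projections $p_v$ and $\delta_v$, $v \in E^0$; more precisely, by the associated $e^\beta$-conformal measure on $E^{\leq \infty}$. First I would record that the state space of $\cT C^*(E)$ is weak*-compact, so the net $(\varphi_\beta)_{\beta \in (\beta_0,\beta_1]}$ has weak*-cluster points as $\beta \to \beta_0+0$; call one of them $\varphi$. A standard continuity argument (the KMS condition is closed under weak*-limits along with the convergence $\beta \to \beta_0$) shows any such $\varphi$ is a KMS$_{\beta_0}$ state. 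To upgrade ``cluster point'' to ``limit,'' I would argue that the hypothesis pins down $\varphi$ uniquely: a KMS$_{\beta_0}$ state is determined by the numbers $\varphi(p_v)$ and $\varphi(\delta_v)$, and these are forced to be $\harm_v$ and $0$ respectively by hypothesis (using that $p_v, \delta_v$ lie in the fixed-point algebra, so their values vary weak*-continuously along the net). Since every cluster point equals this one state, the net converges.

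Next I would verify that the limit state $\varphi$ is supported in $E^\infty$, i.e. $\varphi(\delta_v)=0$ for all $v \in E^0$, which is immediate from the hypothesis $\lim_{\beta\to\beta_0+0}\varphi_\beta(\delta_v)=0$ together with weak*-continuity of evaluation at the fixed element $\delta_v$. By the remark following Theorem \ref{thm:Thomsen} (or directly by the characterisation recalled in Section \ref{ssec:typeI}), being supported in $E^\infty$ is exactly the condition $\varphi(\delta_v)=0$ for all $v$, so this is all that is needed.

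The one genuine subtlety — and the step I expect to be the main obstacle — is justifying that a KMS$_\beta$ state of $\cT C^*(E)$ is \emph{uniquely} determined by the finitely many numbers $\{\varphi(p_v), \varphi(\delta_v)\}_{v \in E^0}$, so that weak*-convergence of a net reduces to convergence of these scalars. This is where I would invoke the expectation $C^*(\cG_E) \to C(E^{\leq\infty})$ through which every KMS state factors (cited in Section \ref{ssec:typeI} from \cite{aHLRS13}): the state is the integral against an $e^\beta$-conformal measure $m$ on $E^{\leq\infty}$, and the KMS/conformality relation $m(Z(\mu)) = e^{-\beta|\mu|} m(Z(r(\mu)))$ for cylinder sets, combined with $m(Z(v)) = \varphi(p_v)$ and $m(\{v\}) = \varphi(\delta_v)$, propagates the values on vertices and their ``stopping'' projections to all cylinder sets, hence determines $m$ on the generating algebra of clopen sets and therefore on all of $E^{\leq\infty}$. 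Once this determinacy is in hand, the weak*-limit exists and is the KMS$_{\beta_0}$ state corresponding to the conformal measure with vertex masses $\harm_v$ and zero atoms, which is supported in $E^\infty$.

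Alternatively, and perhaps more cleanly for the write-up, one can bypass the measure-theoretic language: the fixed-point algebra $\cT C^*(E)^{\gauge}$ is an AF algebra whose $K$-theory (or whose traces, restricted via the KMS condition) is controlled by the vertex data, so a KMS$_\beta$ state restricted to $\cT C^*(E)^\gauge$ is determined by $\{\varphi(p_v),\varphi(\delta_v)\}$, and a KMS state is determined by its restriction to the fixed-point algebra. I would present whichever of these two routes keeps the argument shortest, but in either case the crux is the same determinacy statement; everything else is weak*-compactness plus the standard fact that KMS$_\beta$ states form a weak*-closed set that behaves well under $\beta\to\beta_0$.
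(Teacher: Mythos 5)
Your proposal is correct, and it reaches the conclusion by a somewhat different architecture than the paper. The paper constructs the limit state explicitly: it uses the KMS condition to show that the vector $\harm=[\harm_v]_{v\in E^0}$ satisfies $A_E\harm=e^{\beta_0}\harm$, invokes Theorem \ref{thm:Thomsen} to produce a KMS$_{\beta_0}$ state $\varphi$ supported in $E^\infty$ with $\varphi(p_v)=\harm_v$, and then verifies $\varphi_\beta(a)\to\varphi(a)$ directly for $a$ in the dense spanning set $\cS=\operatorname{span}\{s_\mu s_\nu^*\}$, using the factorisation through the diagonal and the scaling $\varphi_\beta(s_\mu s_\mu^*)=e^{-\beta|\mu|}\varphi_\beta(p_{r(\mu)})$. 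You instead take a weak*-cluster point (compactness of the state space), invoke the standard fact that a weak*-limit of KMS$_\beta$ states along $\beta\to\beta_0$ is a KMS$_{\beta_0}$ state (cf.\ \cite[Section 5.3]{BR2}), and pin the cluster point down by the determinacy of a KMS$_\beta$ state by its values on the vertex projections. That determinacy step is exactly the same computation as the paper's dense-subspace argument, just packaged as a uniqueness statement rather than a convergence statement; what your route buys is that you never have to construct the limit state (the cluster point \emph{is} it), at the cost of importing the compactness and KMS-closedness machinery, which the paper's self-contained argument avoids. Two minor remarks: the state is already determined by $\{\varphi(p_v)\}_{v}$ alone, since $\varphi(\delta_v)=\varphi(p_v)-e^{-\beta}\sum_{e\in vE^1}\varphi(p_{r(e)})$, so the $\delta_v$ data is redundant for determinacy (though it is of course what yields support in $E^\infty$); and the parenthetical appeal to the fixed-point algebra is unnecessary, since evaluation at any fixed element is weak*-continuous.
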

\begin{proof}
By the KMS condition, we have 
\begin{align*}
\varphi_\beta (p_w) = \varphi_\beta (\delta_w) + \sum_{e \in wE^1} \varphi_\beta (s_es_e^*) 
&= \varphi_\beta (\delta_w) + \sum_{e \in wE^1} e^{-\beta} \varphi_\beta (p_{r(e)}) \\
&= \varphi_\beta (\delta_w) + e^{-\beta} \sum_{v \in E^0} |wE^1v| \varphi_\beta (p_v) 
\end{align*}
for any $w \in E^0$. By taking the limit, we have $\harm = e^{-\beta_0} A_E \harm$, which implies that $\harm$ is a $\beta_0$-harmonic vector. By Theorem \ref{thm:Thomsen}, there exists a KMS$_{\beta_0}$ state $\varphi$ corresponding to $\harm$. 
For each $\mu=e_0\cdots e_n \in E^*$, let $s_\mu := s_{e_0} \cdots s_{e_n} \in \cT C^*(E)$. Let $\cS$ be the linear span of $s_\mu s_\nu^*$ for all $\mu, \nu \in E^*$ with $r(\mu)=r(\nu)$ in $\cT C^*(E)$. Since all KMS states of $\cT C^*(E)$ factors through the canonical diagonal by \cite[Proposition 2.1]{aHLRS13}, we have $\varphi_\beta (s_\mu s_\nu^*) = 0 = \varphi(s_\mu s_\nu^*)$ if $\mu \neq \nu$. Moreover, by the KMS condition, 
\[ \varphi_\beta(s_\mu s_\mu^*) = e^{-\beta |\mu|} \varphi_\beta (p_{r(\mu)}) 
\longrightarrow e^{-\beta_0 |\mu|} \varphi (p_{r(\mu)}) = \varphi(s_\mu s_\mu^*). \]
Hence, for any $a \in \cS$, $\varphi_\beta(a)$ converges to $\varphi(a)$. 
It is well-kown (and easy to see) that $\cS$ is dense in $\cT C^*(E)$ (see \cite[Lemma 2.4]{FowRae}, for example). Hence, $\varphi_\beta$ converges to $\varphi$ in the weak*-topology. 
\end{proof}

\subsection{An Equivalence relation of power series}
The key point of the proof of Theorem \ref{thm:main} is the decomposition of each generating function to a product of generating functions localised at each components, up to an equivalence relation (see Definition \ref{def:product} and Lemma \ref{lem:summation}). Here, we prepare an equivalence relation of power series that is suitable for our calculation. We define the equivalence relation in a slightly general setting. 
Similar notions of the equivalence relation could be found in other literatures.  

\begin{dfn} \label{def:functions}
Let $\cF_0$ denote the set of all continuous monotone increasing functions $F=F(x) \colon \bR_+ \to (0, \infty]$ such that $F(x) < \infty$ for some $x>0$. 
For $F \in \cF_0$, let 
\[ x_F = \sup \{ x \in \bR_+ \colon F(x) < \infty \} \in (0,\infty]. \] 
\end{dfn}

By continuity, we have $F(x_F)=\infty$ if $x_F < \infty$. 
The set $\cF_0$ is closed under addition and multiplication. 

\begin{dfn} \label{def:equivalence} Define an equivalence relation $\approx$ on $\cF_0$ as follows: 
 For $F,G \in \cF_0$, we write $F \approx G$ if $x_F = x_G$ and the limit 
\[ \lim_{x \to x_F-0} \frac{F(x)}{G(x)} \]
exists and is finite nonzero. 
For $F \in \cF_0$, the equivalence class of $F$ is denoted by $[F]$. 
\end{dfn}

If $F,G \in \cF_0$ are analytic functions that have $n$-th poles at $x=x_F=x_G$, then $F \approx G$. We use the equivalence relation $\approx$ only in this situation. 
We leave to the reader to prove that $\approx$ is indeed an equivalence relation, and the proof of the next lemma. 

\begin{lem} \label{lem:PropertiesEquiv} 
For $F,F_1,F_2,G_1,G_2 \in \cF_0$, the following hold:
\begin{enumerate}
\item If $x_F < x_G$, then $F+G \approx F$. \label{equiv:addition} 
\item If $x_F < x_G$, then $FG \approx F$. \label{equiv:multiplication1}
\item If $F_1 \approx G_1$ and $F_2 \approx G_2$, then $F_1F_2 \approx G_1G_2$. \label{equiv:multiplication2}
\item For any continuous monotone increasing function $f \colon \bR_+ \to (0,\infty)$, we have $fF \approx F$. \label{equiv:scalar}
\end{enumerate}
\end{lem}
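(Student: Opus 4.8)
The plan is to verify each of the four items in Lemma~\ref{lem:PropertiesEquiv} directly from Definition~\ref{def:equivalence}, since each reduces to an elementary statement about limits of ratios near the critical point $x_F$. The only genuinely structural observation needed is that all the functions involved are monotone increasing, nonnegative, and real-valued up to $x_F$, so the limits of ratios we form are limits of monotone or bounded quantities and automatically exist in $[0,\infty]$; the content of each item is that the limit is in fact finite and nonzero.

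For item~\eqref{equiv:addition}, suppose $x_F<x_G$. Then $G$ is continuous and finite on $[0,x_F]$, hence bounded there, say $G(x)\le M<\infty$ for $x\le x_F$, while $F(x)\to\infty$ as $x\to x_F-0$ by the remark following Definition~\ref{def:functions} (continuity forces $F(x_F)=\infty$ since $x_F<x_G\le x_{F+G}$ would otherwise be violated — more simply, $x_F<x_G$ and $x_F$ is finite so $F(x_F)=\infty$). First note $x_{F+G}=\min(x_F,x_G)=x_F$. Then $\frac{F+G}{F}=1+\frac{G}{F}$, and $0\le \frac{G(x)}{F(x)}\le \frac{M}{F(x)}\to 0$, so $\lim_{x\to x_F-0}\frac{(F+G)(x)}{F(x)}=1$, which is finite and nonzero. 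For item~\eqref{equiv:multiplication1}, again $x_{FG}=\min(x_F,x_G)=x_F$, and $\frac{FG}{F}=G$ (as functions, wherever $F$ is finite and nonzero, which it is on $\bR_+$ below $x_F$ by positivity of elements of $\cF_0$), so $\lim_{x\to x_F-0}\frac{(FG)(x)}{F(x)}=\lim_{x\to x_F-0}G(x)=G(x_F)$; this is finite since $x_F<x_G$ and nonzero since $G>0$. Item~\eqref{equiv:scalar} is the same argument with $G$ replaced by $f$, using that $f$ is continuous and finite-valued at $x_F$ (so $f(x_F)\in(0,\infty)$) and that $x_{fF}=x_F$ because multiplying by a finite positive continuous function changes neither the set where the product is finite nor the blow-up behaviour.

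For item~\eqref{equiv:multiplication2}, from $F_1\approx G_1$ we get $x_{F_1}=x_{G_1}=:a$ and $\lim_{x\to a-0}\frac{F_1}{G_1}=c_1\in(0,\infty)$, and similarly $x_{F_2}=x_{G_2}=:b$ with $\lim_{x\to b-0}\frac{F_2}{G_2}=c_2\in(0,\infty)$. Then $x_{F_1F_2}=\min(a,b)=x_{G_1G_2}$, so the first condition of $\approx$ holds; call the common value $m$. Write $\frac{F_1F_2}{G_1G_2}=\frac{F_1}{G_1}\cdot\frac{F_2}{G_2}$. Since $m\le a$ and $m\le b$, and the limits of $\frac{F_1}{G_1}$ at $a$ and of $\frac{F_2}{G_2}$ at $b$ exist and are finite nonzero, each factor also has a finite nonzero limit as $x\to m-0$ — here one uses monotonicity/continuity of the ratio on $\bR_+$ below the respective singularity to pass from the limit at the endpoint $a$ (resp.\ $b$) to the limit at the possibly-smaller point $m$; concretely, if $m<a$ then $\frac{F_1}{G_1}$ is continuous at $m$ with value in $(0,\infty)$, and if $m=a$ the hypothesis gives the limit directly. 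Hence $\lim_{x\to m-0}\frac{(F_1F_2)(x)}{(G_1G_2)(x)}=c_1'c_2'$ with $c_1',c_2'\in(0,\infty)$, which is finite and nonzero.

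The only point requiring a little care — the ``main obstacle,'' though it is minor — is the bookkeeping of the critical exponents $x_{(\cdot)}$ under sums and products: one must observe in each case that $x_{F+G}=x_{FG}=\min(x_F,x_G)$, which follows from monotonicity and the fact that a sum or product of positive functions is finite at $x$ iff both summands/factors are. Once that is in hand, every statement is a one-line limit computation, so I would state the computation of $x_{(\cdot)}$ once at the start of the proof and then dispatch the four items in sequence as above.
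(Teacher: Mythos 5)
Your proof is correct and is essentially the direct verification that the paper leaves to the reader (its own proof of this lemma is suppressed as routine): the same reduction to the bookkeeping identity $x_{F+G}=x_{FG}=\min(x_F,x_G)$ followed by elementary limit computations, with your treatment of item \eqref{equiv:multiplication2} (splitting into the cases $m<a$ versus $m=a$ to pass from the limit at the individual abscissae to the limit at their minimum) filling in the one step that genuinely needs a word. The only caveat concerns item \eqref{equiv:scalar}: your argument evaluates $f$ at $x_F$ and therefore tacitly assumes $x_F<\infty$; when $x_F=\infty$ the conclusion can actually fail (take $F\equiv 1\in\cF_0$ and $f(x)=x$, so that $(fF)/F=x\to\infty$ as $x\to\infty$), but this is an imprecision already present in the lemma as stated rather than a defect of your argument, and it is harmless in the paper's applications, where the generating functions to which item \eqref{equiv:scalar} is ultimately applied have finite abscissa.
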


\begin{comment}
\begin{proof}
(1) It is clear that we have $x_{F+G}=x_F$. In addition, we have 
\[ \lim_{x \to x_F-0} \frac{F(x)+G(x)}{F(x)} = 1. \]

(2) Similar to (1). 

(3) Let $x_1=x_{F_1}=x_{G_1}$ and $x_2=x_{F_2}=x_{G_2}$. Without loss of generality, we may assume $x_1 \leq x_2$. 

\[ \lim_{x \to x_1-0} \frac{F_1(x)F_2(x)}{G_1(x)G_2(x)} = \left( \lim_{x \to x_1-0} \frac{F_1(x)}{G_1(x)} \right) \left( \lim_{x \to x_1-0} \frac{F_2(x)}{G_2(x)} \right). \]

(4) Obvious. 
\end{proof}
\end{comment}

\begin{dfn}
For $\alpha \in \bR_+$, let 
\[ \Phi_{\alpha}(x) = \frac{1}{1-\alpha^{-1}x}. \]
In addition, define
\[ \cF_1 = \{F \in \cF_0 \colon F \approx \Phi_{\alpha}^n \mbox{ for some } n \in \bN \mbox{ and } \alpha \in \bR_+\} \]
and let 
\[ \cF = \cF_1/\approx \ = \bigl\{ [\Phi_{\alpha}^n] \colon n \in \bN, \alpha \in \bR_+ \bigr\}. \]
\end{dfn}

Note that if $F \in \cF_0$ is an analytic function that has a $n$-th pole at $x=x_F$, then $F \in \cF_1$ and $[F]=[\Phi_{x_F}^n]$. 

\begin{lem} \label{lem:PropertiesEquiv2}
Let $F_1,F_2,G_1,G_2 \in \cF_1$ and suppose $F_1 \approx G_1$, $F_2 \approx G_2$. Then, we have $F_1+F_2, G_1+G_2 \in \cF_1$ and $F_1+F_2 \approx G_1+G_2$. 
\end{lem}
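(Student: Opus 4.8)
The plan is to reduce the statement to the canonical representatives of the two classes involved and then exploit that $\approx$ is an equivalence relation. Since $F_1,F_2\in\cF_1$, I would fix $n_1,n_2\in\bN$ and $\alpha_1,\alpha_2\in\bR_+$ with $F_1\approx\Phi_{\alpha_1}^{n_1}$ and $F_2\approx\Phi_{\alpha_2}^{n_2}$; from $F_i\approx G_i$ one then also gets $G_1\approx\Phi_{\alpha_1}^{n_1}$ and $G_2\approx\Phi_{\alpha_2}^{n_2}$. The key point is that $F_1+F_2$ will turn out to be $\approx$-equivalent to a single function $H\in\cF_1$ that depends only on $\alpha_1,\alpha_2,n_1,n_2$ and not on the particular choice of $F_1,F_2$; running the identical computation with the $G_i$ then gives $G_1+G_2\approx H$, and transitivity yields $F_1+F_2\approx G_1+G_2$ together with $F_1+F_2,G_1+G_2\in\cF_1$. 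I would record at the outset that $x_{\Phi_\alpha^n}=\alpha$, so $x_{F_i}=x_{G_i}=\alpha_i$, and that any $F\in\cF_0$ equals $\infty$ exactly on $[x_F,\infty)\cap\bR_+$, whence $x_{F_1+F_2}=\min(x_{F_1},x_{F_2})$.

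First I would dispose of the case $\alpha_1\neq\alpha_2$, say $\alpha_1<\alpha_2$. Then $x_{F_1}<x_{F_2}$, and Lemma \ref{lem:PropertiesEquiv}\,\eqref{equiv:addition} gives $F_1+F_2\approx F_1\approx\Phi_{\alpha_1}^{n_1}$; likewise $G_1+G_2\approx\Phi_{\alpha_1}^{n_1}$, so here $H=\Phi_{\alpha_1}^{n_1}$ works.

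The remaining case is $\alpha_1=\alpha_2$, with common value $\alpha$. Here I would assume without loss of generality $n_1\geq n_2$ and show $F_1+F_2\approx\Phi_\alpha^{n_1}$. Since $x_{F_1+F_2}=\alpha=x_{\Phi_\alpha^{n_1}}$, it remains only to compute the limit of the ratio for $x$ near $\alpha$ from the left; using $\Phi_\alpha(x)=1/(1-\alpha^{-1}x)$,
\[ \frac{F_1(x)+F_2(x)}{\Phi_\alpha^{n_1}(x)}=\frac{F_1(x)}{\Phi_\alpha^{n_1}(x)}+\frac{F_2(x)}{\Phi_\alpha^{n_2}(x)}\,(1-\alpha^{-1}x)^{n_1-n_2}. \]
As $x\to\alpha-0$ the first summand tends to $\lim F_1/\Phi_\alpha^{n_1}\in(0,\infty)$, and the second tends to a finite limit, equal to $0$ if $n_1>n_2$ and to $\lim F_2/\Phi_\alpha^{n_2}\in(0,\infty)$ if $n_1=n_2$. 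Both summands being nonnegative and the first having strictly positive limit, the total limit is finite and strictly positive, so $F_1+F_2\approx\Phi_\alpha^{n_1}\in\cF_1$; the same computation gives $G_1+G_2\approx\Phi_\alpha^{n_1}$, so $H=\Phi_\alpha^{n_1}$, and the proof is complete.

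The argument is essentially bookkeeping with Lemma \ref{lem:PropertiesEquiv}; the one step I would be careful with is verifying that the limit in the displayed identity is \emph{nonzero}, not merely finite. This is precisely why one must keep the larger exponent $n_1$: the surviving leading term $F_1/\Phi_\alpha^{n_1}$ already has a strictly positive limit, while the remaining term contributes only a nonnegative amount. I do not anticipate any other obstacle.
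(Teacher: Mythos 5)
Your proof is correct and follows essentially the same route as the paper: reduce to the representatives $\Phi_{\alpha_i}^{n_i}$, dispose of the case $\alpha_1\neq\alpha_2$ via Lemma \ref{lem:PropertiesEquiv}\,\eqref{equiv:addition}, and observe that when $\alpha_1=\alpha_2$ the larger exponent dominates the sum. The only difference is that you write out explicitly the limit computation that the paper compresses into ``we can see that,'' which is a harmless (indeed welcome) elaboration.
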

\begin{proof}
Suppose $F_1 \approx G_1 \approx \Phi_{\alpha}^n$ and $F_2 \approx G_2 \approx \Phi_{\beta}^m$. Then, $x_{F_1}=x_{G_1}=\alpha$ and $x_{F_2}=x_{G_2}=\beta$. If $\alpha < \beta$, then by \eqref{equiv:addition} of Lemma \ref{lem:PropertiesEquiv}, we have $F_1+F_2 \approx F_1$ and $G_1+G_2 \approx G_1$, so that $F_1+F_2 \approx G_1+G_2$. The case $\beta < \alpha$ is similar. Hence, we assume $\alpha=\beta$. In addition, we may assume $n \leq m$. Then, we can see that $F_1+G_1 \approx \Phi_{\alpha}^m \approx F_2+G_2$, which completes the proof. 
\end{proof}

\begin{rmk} \label{rmk:totalorder}
By Lemma \ref{lem:PropertiesEquiv} \eqref{equiv:multiplication2} and Lemma \ref{lem:PropertiesEquiv2}, we define the addition and the multiplication on $\cF$ by those induced from $\cF_1$. 
In addition, $\cF$ has the total order $\preceq$ defined by $[F] \preceq [G]$ if $x_F \geq x_G$ and 
$[F]+[G]=[G]$. Note that $[\Phi_\alpha]^n \preceq [\Phi_\beta]^m$ if and only if either $\alpha>\beta$, or $\alpha = \beta$ and $n \leq m$. 

If $F,G \in \cF_1$ satisfies $[F] \prec [G]$, then we can see that 
\[ \lim_{x \to x_G-0} \frac{F(x)}{G(x)} = 0. \]
Therefore, for any $F,G \in \cF_1$, the limit 
\[ \lim_{x \to x_0-0} \frac{F(x)}{G(x)} \]
always exists, where $x_0 = \min \{x_F, x_G\}$. 
\end{rmk}

We need observations related to the Perron--Frobenius theorem. 
Recall that for a nonnegative irreducible matrix $A \in M_n(\bR)$, the \emph{Perron projection of $A$} is the (not necessarily orthogonal)  projection $\xi^l (\xi^r)^*$ on $\bR^n$, where $\xi^l, \xi^r \in \bR^n$ are left and right Perron--Frobenius eigenvectors of $A$ respectively, satisfying that $\langle \xi^r, \xi^l \rangle=1$. Note that the standard inner product of $\bR^n$ is considered here. The next lemma is possibly well-known. 

\begin{lem} \label{lem:simplepole}
Let $A \in M_n(\bZ)$ be a nonnegative irreducible matrix. Let $\lambda = \rho(A)$. Then, $(1-zA)^{-1}$ has a simple pole at $z=\lambda^{-1}$. 
That is, the limit
\[ \lim_{z \to \lambda^{-1}} \frac{1-z\lambda}{1-zA} \]
exists and nonzero. Moreover, this limit coincides with the Perron projection of $A$. 
\end{lem}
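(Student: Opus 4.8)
The plan is to diagonalize (or, more precisely, use the resolvent expansion of) $A$ around its Perron eigenvalue. Let $\lambda=\rho(A)$ and let $P$ denote the Perron projection $\xi^l(\xi^r)^*$, so that $AP=PA=\lambda P$ and, by the Perron--Frobenius theorem for irreducible matrices (\cite[Theorem 1.5]{Sen}), $\lambda$ is a simple eigenvalue of $A$ with strictly positive left and right eigenvectors. Write $Q=I-P$; then $Q$ commutes with $A$, the restriction $A|_{\operatorname{ran}Q}$ has spectral radius strictly less than $\lambda$, and we have the block decomposition $A=\lambda P+AQ$ with $PQ=QP=0$. Consequently, for $z$ near $\lambda^{-1}$,
\[
(I-zA)^{-1}=(I-z\lambda)^{-1}P+(I-zAQ)^{-1}Q,
\]
using that $P$ and $Q$ reduce $A$ and that on $\operatorname{ran}P$ the operator $I-zA$ acts as multiplication by $1-z\lambda$. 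The second summand is analytic at $z=\lambda^{-1}$, since $\rho(AQ)<\lambda$ forces $I-zAQ$ to be invertible in a neighbourhood of $z=\lambda^{-1}$.

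From this identity the conclusion is immediate: multiplying by $1-z\lambda$ gives
\[
(1-z\lambda)(I-zA)^{-1}=P+(1-z\lambda)(I-zAQ)^{-1}Q,
\]
and letting $z\to\lambda^{-1}$ the second term vanishes (the factor $(1-z\lambda)\to 0$ while the rest stays bounded), so the limit equals $P$, the Perron projection. Since $P\neq 0$ (its trace is $\langle\xi^r,\xi^l\rangle=1$), the pole at $z=\lambda^{-1}$ is genuinely present and simple: $(I-zA)^{-1}$ blows up like $(1-z\lambda)^{-1}$ and no higher power, because the only eigenvalue of $A$ on the circle $|z^{-1}|=\lambda$ that contributes a singularity is the simple eigenvalue $\lambda$ itself, the rest of the spectrum of $A$ lying strictly inside.

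The only mildly delicate point is justifying the resolvent splitting: one must know that $A$ leaves both $\operatorname{ran}P$ and $\operatorname{ran}Q$ invariant and that the spectrum of $A$ restricted to $\operatorname{ran}Q$ omits $\lambda$. The first is the defining property of a spectral projection (here available explicitly as $\xi^l(\xi^r)^*$ with $A\xi^r=\lambda\xi^r$, $(\xi^l)^*A=\lambda(\xi^l)^*$, $\langle\xi^r,\xi^l\rangle=1$, which one checks gives $P^2=P$, $AP=PA=\lambda P$ by a direct computation), and the second is exactly the statement that $\lambda$ is a \emph{simple} eigenvalue, so its algebraic multiplicity is one and it does not recur in $A|_{\operatorname{ran}Q}$. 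Both are furnished by \cite[Theorem 1.5]{Sen}. Everything else is the elementary fact that a matrix-valued function $z\mapsto(I-zB)^{-1}$ is analytic wherever $1/z$ avoids the spectrum of $B$. I expect the write-up to be short; the main thing to be careful about is not conflating the (possibly non-orthogonal) Perron projection with an orthogonal one, and keeping the pairing $\langle\xi^r,\xi^l\rangle=1$ normalization explicit so that the limit is identified on the nose with $\xi^l(\xi^r)^*$.
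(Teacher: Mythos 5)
Your argument is essentially the paper's: both rest on the Perron--Frobenius fact that $\lambda$ is a simple eigenvalue and then split the resolvent into the rank-one Perron part, which contributes the factor $(1-z\lambda)^{-1}$, and a complementary part analytic at $z=\lambda^{-1}$ (the paper realizes the splitting by a change of basis $R^{-1}AR=\operatorname{diag}(\lambda,A_0)$, you by the spectral projections $P$ and $Q=I-P$; these are the same decomposition in different clothing). One caveat: your claim that $A|_{\operatorname{ran}Q}$ has spectral radius \emph{strictly less} than $\lambda$ is false for imprimitive irreducible matrices --- e.g.\ for $A=\left[\begin{smallmatrix}0&1\\1&0\end{smallmatrix}\right]$ the eigenvalue $-1$ also has modulus $\rho(A)=1$ --- so $\rho(AQ)<\lambda$ cannot be invoked. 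What the argument actually needs, and what you correctly state in your final paragraph (and what the paper uses, writing only $\lambda\notin\Sp(A_0)$), is that $\lambda$ itself does not lie in $\Sp(A|_{\operatorname{ran}Q})$, which follows from simplicity of $\lambda$ and already guarantees that $(I-zAQ)^{-1}$ is analytic near $z=\lambda^{-1}$. With that one sentence corrected, the proof is complete.
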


\begin{proof}
By the Perron--Frobenius theorem \cite[Theorem 1.5]{Sen}, $\lambda$ is a simple root of the characteristic polynomial of $A$. Hence, there exists $R \in GL_n(\bC)$ and $A_0 \in M_{n-1}(\bC)$ such that 
\[ R^{-1}AR = \begin{bmatrix}  \lambda & 0 \\ 0 & A_0 \end{bmatrix}.  \]

Then, we can see that the Perron projection of $A$ is equal to $R \begin{bmatrix} 1 & 0 \\ 0 & 0 \end{bmatrix} R^{-1}$. 
For $z \in \bC \setminus \Sp(A)$, we have
\[ (1-zA)^{-1} = R\begin{bmatrix}  (1-z\lambda)^{-1} & 0 \\ 0 & (1-zA_0)^{-1} \end{bmatrix} R^{-1}. \]
Since $\lambda \not\in \Sp(A_0)$, we have
\[ \lim_{z \to \lambda^{-1}} \frac{1-z\lambda}{1-zA} = R \begin{bmatrix} 1 & 0 \\ 0 & 0 \end{bmatrix} R^{-1}, \]
which completes the proof. 
\end{proof}

\begin{prop} \label{prop:simplepole}
Let $E$ be a finite strongly connected graph satisfying $\rho(A_E)>0$. Let $P$ be the Perron projection of $A_E$. Let $\{\xi_v\}_{v \in E^0}$ be the  orthonormal basis of $\bR^{E^0}$ with respect to the standard inner product. Then, for any $v,w \in E^0$, 
\[ Z_{w,v}(-\log z) = \sum_{\nu \in wE^*v} z^{ |\nu|} \]
has a simple pole at $z = \rho(A_E)^{-1}$ with residue $-\lambda^{-1} \langle P \xi_v, \xi_w \rangle$. 
\end{prop}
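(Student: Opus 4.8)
The idea is to recognise $Z_{w,v}(-\log z)$ as a matrix entry of the resolvent $(1-zA_E)^{-1}$ and then read off the residue from Lemma~\ref{lem:simplepole}. First I would expand the generating function as a power series: since $|wE^nv| = (A_E^n)_{w,v}$, for $|z| < \rho(A_E)^{-1}$ we have
\[
Z_{w,v}(-\log z) = \sum_{n=0}^\infty |wE^nv|\,z^n = \sum_{n=0}^\infty (A_E^n)_{w,v}\,z^n = \bigl\langle (1-zA_E)^{-1}\xi_v, \xi_w\bigr\rangle,
\]
and the right-hand side is a rational function of $z$, providing the meromorphic continuation of $Z_{w,v}(-\log z)$. (The only bookkeeping point is that a path from $w$ to $v$ of length $n$ corresponds to the $(w,v)$-entry of $A_E^n$, matching the convention $A_{w,v} = |wE^1w|$.)

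Next I would observe that $\rho(A_E)>0$ forces $E$ to contain an edge, so the strong connectedness of $E$ makes $A_E$ a nonnegative irreducible matrix in $M_{|E^0|}(\bZ)$, and Lemma~\ref{lem:simplepole} applies with $A = A_E$ and $\lambda = \rho(A_E)$: the resolvent $(1-zA_E)^{-1}$ has a simple pole at $z=\lambda^{-1}$ and $\lim_{z\to\lambda^{-1}}(1-z\lambda)(1-zA_E)^{-1} = P$. Writing $1-z\lambda = -\lambda(z-\lambda^{-1})$, this identifies the principal part of $(1-zA_E)^{-1}$ at $\lambda^{-1}$ as $-\lambda^{-1}P/(z-\lambda^{-1})$, i.e. the residue of $(1-zA_E)^{-1}$ at $\lambda^{-1}$ equals $-\lambda^{-1}P$. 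Taking $(w,v)$-entries gives that $Z_{w,v}(-\log z)$ has residue $-\lambda^{-1}\langle P\xi_v, \xi_w\rangle$ at $z=\lambda^{-1}$.

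It remains to confirm that the pole is genuinely of order one, which amounts to $\langle P\xi_v, \xi_w\rangle \neq 0$. Since $P = \xi^l(\xi^r)^*$ with $\xi^l, \xi^r$ the left and right Perron--Frobenius eigenvectors of $A_E$, one computes $\langle P\xi_v, \xi_w\rangle = \xi^r_v\,\xi^l_w$; by the Perron--Frobenius theorem (\cite[Theorem~1.5]{Sen}) both eigenvectors of the irreducible matrix $A_E$ have strictly positive entries, so $\langle P\xi_v, \xi_w\rangle > 0$. This both shows the pole is simple and records the sign of the residue.

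There is no serious obstacle in this argument; everything of substance is already packaged in Lemma~\ref{lem:simplepole}. The only places requiring care are the index conventions when passing between paths and powers of $A_E$, the conversion of the limit in Lemma~\ref{lem:simplepole} into a residue via the factorisation $1-z\lambda = -\lambda(z-\lambda^{-1})$, and the use of strong connectedness (rather than mere irreducibility) to guarantee strict positivity of the Perron eigenvectors, hence non-vanishing of the residue.
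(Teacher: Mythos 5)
Your proposal is correct and follows essentially the same route as the paper's proof: identify $Z_{w,v}(-\log z)$ with the $(w,v)$-entry of the resolvent $(1-zA_E)^{-1}$, apply Lemma~\ref{lem:simplepole} to extract the residue $-\lambda^{-1}\langle P\xi_v,\xi_w\rangle$, and verify that this quantity is strictly positive via the Perron--Frobenius eigenvectors. (Only a typo to fix: the convention should read $A_{w,v}=|wE^1v|$, not $|wE^1w|$.)
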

\begin{proof}
Let 
$\lambda=\rho(A)$, and fix $v,w \in E^0$. 
Then, the function $Z_{w,v}(-\log z)$ is equal to the $(w,v)$-entry of 
\[ \sum_{k=0}^\infty z^kA^k = \frac{1}{1-zA}. \]
Hence, $Z_{w,v}(-\log z)$ has an analytic continuation to a holomorphic function on a neighbourhood of $\lambda^{-1}$ in $\bC$ except for $z= \lambda^{-1}$. 
Moreover, by Lemma \ref{lem:simplepole}, 
\begin{align*}
\lim_{z \to \lambda^{-1}} (z-\lambda^{-1})Z_{w,v}(-\log z) &= 
-\lambda^{-1} \lim_{z \to \lambda^{-1}} (1-z\lambda)Z_{w,v}(-\log z) \\
&= -\lambda^{-1} \left\langle \lim_{z \to \lambda^{-1}} \frac{1-z\lambda}{1-zA}\xi_v, \xi_w \right\rangle
= -\lambda^{-1} \langle P \xi_v, \xi_w \rangle. 
\end{align*}

It remains to show that $\langle P \xi_v, \xi_w \rangle >0$. 
Let $\xi^l, \xi^r \in \bR^{E^0}$ be the left and right Perron--Frobenius eigenvectors of $A_E$ respectively, satisfying that $\langle \xi^r, \xi^l \rangle=1$. By definition, we have $\langle \xi^l, \xi_{w'} \rangle >0$ and $\langle \xi^r, \xi_{w'} \rangle >0$ for all $w' \in E^0$. Since $P\eta = \langle \eta, \xi^r \rangle \xi^l$ for any $\eta \in \bR^{E^0}$, we have
\[ \langle P \xi_v, \xi_w \rangle = \langle \xi_v, \xi^r \rangle \langle \xi^l, \xi_w \rangle >0, \]  
which completes the proof. 
\end{proof}

\begin{rmk}
The graph $E_0$ satisfying $|E^0_0|=1$ and $|E^1_0|=0$ is a strongly connected graph by definition. For a non-empty strongly connected graph $E$, $\rho(A_E)=0$ if and only if $E$ is isomorphic to $E_0$. 
\end{rmk}

\subsection{Graphs of strongly connected components}

From a finite graph $E$ and a component $C_0$, we define finite graphs by identifying all vertices in each components. 

\begin{dfn}
For a finite graph $E$ and a component $C_0 \in \pi(E)$, we define a finite graph $B(E,C_0)$ as follows: The vertex set is
\[ B(E,C_0)^0 = \{C \in \pi(E) \colon CE^*C_0 \neq \emptyset \}. \]
We put an edge from $C' \in B(E,C_0)^0$ to $C \in B(E,C_0)^0$ if there exists $e \in E^1$ such that $s(e) \in C'$ and $r(e) \in C$. The source and range maps of $B(E,C_0)$ are denoted by $S$ and $R$ respectively. 
\end{dfn}

For simplicity, $B(E,C_0)$ is just denoted by $B$ when $E$ and $C_0$ are obvious. 
By definition, $C_0$ is the unique sink in $B(T,C_0)$, which is connected from all vertices in $B$. For any $C,C' \in B^0$, we have $|C'B^1C| \in \{ 0,1 \}$. In addition, $B$ does not contain loops. 
The graph $B$ and maximal paths defined as below will be used to formulate Theorem \ref{thm:main}. 

\begin{dfn} \label{def:maximal}
Let $E$ be a finite graph and let $v \in E^0$. Let $C_0 \in \pi(E)$ be the component containing $v$. 
\begin{enumerate} 
\item A component $C \in B(E,C_0)^0$ is called \emph{critical} if $\rho(A_C)=e^{\beta_{v}}$. \label{item:critical}
\item For each path $\mu = e_0\cdots e_n \in B^*C_0$, let \label{item:path}
\[ \critical(\mu) = \{ C \in \{S(e_i)\}_{i=0}^n \cup \{C_0\}\ \colon \rho(A_C)=e^{\beta_{v}}\}. \]
be the set of critical components in $\mu$. 
A path $\mu \in B^*C_0$ is called a \emph{maximal path to $C_0$} if 
\[ |\critical(\mu)| = \max \{ |\critical(\nu)| \colon \nu \in B^*C_0\}. \]
\end{enumerate}
\end{dfn}

Note that the critical component in Definition \ref{def:maximal} is a different notion from those defined in \cite{aHLRS15}. 
The information of the number of edges between components is not contained in the definition of $B(E,C_0)$, because it will turn out to be unnecessary. However, we technically need another graph containing that information in an intermediate step. 
\begin{dfn}
For a finite graph $E$ and a component $C_0 \in \pi(E)$, we define a finite graph $\tilde{B}(E,C_0)$ as follows: The vertex set is
\[ \tilde{B}(E,C_0)^0 = B(E,C_0)^0 = \{C \in \pi(E) \colon CE^*C_0 \neq \emptyset \}, \]
and the edge set is
\[ \tilde{B}(E,C_0)^1 = \{ e \in E^1 \colon s(e) \mbox{ and } r(e) \mbox{ belong to different components} \}. \]
The source and range maps of $\tilde{B}(E,C_0)$ are denoted by $S$ and $R$ respectively. For each $e \in \tilde{B}(E,C_0)^1$ and $C \in \tilde{B}(E,C_0)^0$, define 
$S(e)=C$ if $s(e) \in C$ in $E$, and $R(e)=C$ if $r(e) \in C$ in $E$. 
In addition, let $\varepsilon_{E,C_0} \colon \tilde{B}(E,C_0)^1 \to B(E,C_0)^1$ be the unique surjection preserving the source and range maps. 
\end{dfn}

As before, we omit $E$ and $C_0$ when they are obvious. 
We always identify $\tilde{B}^1$ with the subset of $E^1$.  
The graph $\tilde{B}(E,C_0)$ will only be used to show that we can indeed ignore the number of edges between components (see Lemma \ref{lem:summation}). 

Until the end of this subsection, we fix a finite graph $E$ and a component $C_0 \in \pi(E)$.  

\begin{dfn} \label{dfn:Btilde}
Let $v \in C_0$, $C \in \tilde{B}(E,C_0)^0$ and $w \in C$. 
For each $\tilde{\mu} = e_1e_2 \cdots e_n \in C\tilde{B}^*C_0$, define 
\[ X(\tilde{\mu},w,v) = \left\{ \nu_1 e_1 \nu_2 e_2 \cdots \nu_n e_n \nu_{n+1} \in E^1 \colon 
\begin{array}{c} \nu_1 \in wE^*s(e_1),\ \nu_{n+1} \in r(e_n)E^*v, \\
\nu_i \in r(e_{i-1})E^*s(e_i) \mbox{ for } i=2,\cdots,n
\end{array} \right\}. \]
\end{dfn}

Note that the source and range maps in Definition \ref{dfn:Btilde} are those of $E$ (not of $\tilde{B}$). In addition, 
each $\nu_i$ is a path in $E_{S(e_i)}$ for $i=1,\cdots,n$, and $\nu_{n+1}$ is a path in $E_{C_0}$ in the definition of $X(\tilde{\mu},w,v)$. 

\begin{lem} \label{lem:decomposition}
Let $v \in C_0$, $C \in \tilde{B}(E,C_0)^0$ and $w \in C$. Then, we have 
\begin{equation} \label{eqn:union}
 wE^*v = \bigsqcup_{\tilde{\mu} \in C\tilde{B}^*C_0} X(\tilde{\mu},w,v)  = \bigsqcup_{\mu \in CB^*C_0} \bigsqcup_{\tilde{\mu} \in \varepsilon^{-1}(\mu)} X(\tilde{\mu},w,v).  
\end{equation}
\end{lem}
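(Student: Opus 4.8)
The plan is to decompose an arbitrary path $\nu \in wE^*v$ according to the sequence of components it visits, recording the edges that cross between distinct components. First I would argue that for any $\nu \in wE^*v$, since $\nu$ starts at $w \in C$ and ends at $v \in C_0$, we can list the maximal consecutive subpaths of $\nu$ that stay within a single component: there is a unique way to write $\nu = \nu_1 e_1 \nu_2 e_2 \cdots \nu_n e_n \nu_{n+1}$ where each $e_i \in E^1$ is an edge with $s(e_i)$ and $r(e_i)$ in different components (so $e_i \in \tilde{B}^1$) and each $\nu_i$ lies entirely inside one component. The key observation is that once an edge crosses from a component $C'$ to a component $C''$ with $C' \neq C''$, the path can never return to $C'$ (otherwise $C'$ and $C''$ would be the same component); hence the sequence $S(e_1), R(e_1)=S(e_2), R(e_2)=S(e_3),\dots$ is a genuine path $\tilde{\mu} = e_1 e_2 \cdots e_n$ in $\tilde{B}$ starting at $C$ and ending at $C_0$, and moreover $\nu_1 \in wE^*s(e_1)$, $\nu_{n+1} \in r(e_n)E^*v$, and $\nu_i \in r(e_{i-1})E^*s(e_i)$ for $2 \le i \le n$. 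This shows $\nu \in X(\tilde{\mu},w,v)$ for this particular $\tilde{\mu}$, giving the inclusion $wE^*v \subseteq \bigcup_{\tilde{\mu}} X(\tilde{\mu},w,v)$. The reverse inclusion is immediate from the definition of $X(\tilde{\mu},w,v)$, since concatenating the pieces yields a path from $w$ to $v$ in $E$.

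Next I would verify that the union is disjoint. Suppose $\nu \in X(\tilde{\mu},w,v) \cap X(\tilde{\mu}',w,v)$ for two paths $\tilde{\mu},\tilde{\mu}' \in C\tilde{B}^*C_0$. The edges $e_i$ appearing in the factorization $\nu = \nu_1 e_1 \cdots e_n \nu_{n+1}$ witnessing $\nu \in X(\tilde{\mu},w,v)$ are precisely the edges of $\nu$ whose source and range lie in different components — this is forced, because each $\nu_i$ stays inside one component while each $e_i$ crosses between two. Therefore the factorization is intrinsic to $\nu$, the sequence of crossing edges is determined, and $\tilde{\mu} = \tilde{\mu}'$. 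This gives the first equality in \eqref{eqn:union}. The second equality is then purely formal: since $\varepsilon = \varepsilon_{E,C_0} \colon \tilde{B}^1 \to B^1$ is a surjection preserving source and range, it induces a surjection $C\tilde{B}^*C_0 \to CB^*C_0$ on path sets, so $C\tilde{B}^*C_0 = \bigsqcup_{\mu \in CB^*C_0} \varepsilon^{-1}(\mu)$ as a disjoint union, and substituting this partition into the first union yields the claim.

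I do not anticipate a serious obstacle here; the content is entirely bookkeeping. The one point that needs care is the claim that once a path leaves a component it never returns — this is where strong connectedness of components is used, and it is what makes the component sequence of $\nu$ a legitimate path in $\tilde{B}$ (in particular of finite, well-defined length) rather than something that could oscillate. I would state this "no return" property explicitly as the crux of the argument, since it simultaneously underlies both the existence of the factorization and its uniqueness.
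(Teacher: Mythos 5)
Your argument is correct and takes essentially the same route as the paper: both decompose an arbitrary $\nu \in wE^*v$ uniquely at its component-crossing edges and deduce disjointness of the union from the uniqueness of that factorization, with the second equality being formal from the partition $C\tilde{B}^*C_0 = \bigsqcup_{\mu} \varepsilon^{-1}(\mu)$. The only difference is that you spell out the ``no return'' property of strongly connected components underlying the uniqueness, which the paper leaves implicit.
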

\begin{proof}
Let $\nu \in wE^*v$ be a finite path. Then, $\nu$ can be uniquely written as 
\begin{equation} \label{eqn:paths}
\nu = \nu_1 e_1 \nu_2 e_2 \cdots \nu_n e_n \nu_{n+1}, 
\end{equation}
where $\nu_i$ is a path whose source and range belong to the same component, and $e_i$ is an edge whose source and range belong to different components. Then $\tilde{\mu}=e_1e_2\cdots e_n$ is a path in $\tilde{B}$, and we can see that $\nu \in X(\tilde{\mu},w,v)$. Moreover, the uniqueness of the decomposition in Equation \eqref{eqn:paths} implies that the unions in Equation \eqref{eqn:union} are indeed disjoint. 
\end{proof}

\subsection{The Main Theorem}

The purpose of this section is to prove the next theorem. 

\begin{thm} \label{thm:main}
Let $E$ be a finite graph. Let $v \in E^0$ be a vertex with $\beta_v>0$, and let $C_0 \in \pi(E)$ be the component containing $v$. Then, the following hold: 
\begin{enumerate}
\item The weak*-limit $\displaystyle \varphi_v :=\lim_{\beta \to \beta_v+0} \varphi_{\beta,v}$ exists. \label{main:existence}
\item Let \label{main:convex}
\begin{equation} \label{eqn:conclusion}
 \varphi_v = \sum_{C \in \crit{v}(E)} \lambda_C \psi_C 
\end{equation}
be the decomposition of $\varphi_v$ into a convex combination of extremal KMS$_\beta$ states. Then, $\lambda_C>0$ if and only if there exists a maximal path $\mu \in CB^*C_0$. 
\end{enumerate}
\end{thm}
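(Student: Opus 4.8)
The plan is to reduce everything to the asymptotics of the generating function $Z_v(\beta)$ as $\beta\to\beta_v+0$ and of the numerators $Z_{w,v}^C(\beta)$ appearing in Corollary \ref{cor:coefficients}, working modulo the equivalence relation $\approx$ introduced in Section 3.2. The key structural input is Lemma \ref{lem:decomposition}: for $w$ in a component $C$ of $\tilde B$, the path set $wE^*v$ decomposes as a disjoint union over $\tilde\mu\in C\tilde B^*C_0$ of the ``inflated path'' sets $X(\tilde\mu,w,v)$, and each $X(\tilde\mu,w,v)$ factors as a product of intra-component path sets glued along the fixed edges $e_i$ of $\tilde\mu$. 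Summing the corresponding geometric series, the generating function of $X(\tilde\mu,w,v)$ is, up to a positive scalar (the factor $e^{-\beta|\tilde\mu|}$), a product $\prod_i Z_{\cdot,\cdot}^{S(e_i)}\cdot Z_{\cdot,\cdot}^{C_0}$ of generating functions localized at the components along $\tilde\mu$. By Proposition \ref{prop:simplepole}, for a strongly connected component $D$ the function $Z_{w,w'}^D(-\log z)$ has a simple pole at $z=\rho(A_D)^{-1}$ with \emph{strictly positive} residue (for the relevant entries), so each such factor lies in $\cF_1$ with $[Z^D_{w,w'}]=[\Phi_{\rho(A_D)}]$; a non-strongly-connected intermediate stretch contributes a bounded factor. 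Hence, by Lemma \ref{lem:PropertiesEquiv} \eqref{equiv:multiplication2} and Lemma \ref{lem:PropertiesEquiv2}, the generating function of $X(\tilde\mu,w,v)$ is $\approx \Phi_{e^{\beta_v}}^{|\critical(\mu)|}$ when all critical components lie on $\mu=\varepsilon(\tilde\mu)$ and $e^{\beta_v}=\min$ over the $\rho(A_{S(e_i)}),\rho(A_{C_0})$; more precisely, letting $k(\mu)$ be the number of critical components on $\mu$ and noting that every component $C$ in $B^0$ has $\rho(A_C)\le e^{\beta_v}$ with equality exactly for critical ones (this follows from minimality considerations and the formula for $\beta_v$), one gets $[\,(\text{gen.\ fn.\ of }X(\tilde\mu,w,v))\,]=[\Phi_{e^{\beta_v}}^{k(\mu)}]$ for $\mu$ such that every critical component of $B$ appears, i.e.\ for maximal paths, and a strictly smaller class $[\Phi_{e^{\beta_v}}^{j}]$ with $j<k_{\max}$ otherwise.

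\textbf{Assembling the pieces.} Next I would sum over all $\tilde\mu$, using the disjointness in Lemma \ref{lem:decomposition}: since $\varepsilon^{-1}(\mu)$ is finite and each $X(\tilde\mu,w,v)$ with $\varepsilon(\tilde\mu)=\mu$ has generating function in the same equivalence class $[\Phi_{e^{\beta_v}}^{k(\mu)}]$ (the number of inflating edges differs but the \emph{critical* content} is the same), Lemma \ref{lem:PropertiesEquiv2} gives that $\sum_{\tilde\mu\in\varepsilon^{-1}(\mu)}(\text{gen.\ fn.})\approx\Phi_{e^{\beta_v}}^{k(\mu)}$, with the limit of the ratio strictly positive (no cancellation, as all terms are power series with nonnegative coefficients). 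Summing over $\mu\in CB^*C_0$ and invoking Lemma \ref{lem:PropertiesEquiv2} once more, $Z_{w,v}^C(\beta)=\sum_{\mu}\sum_{\tilde\mu\in\varepsilon^{-1}(\mu)}(\text{gen.\ fn.})$ has equivalence class $[\Phi_{e^{\beta_v}}^{k_C}]$ where $k_C=\max\{k(\mu)\colon\mu\in CB^*C_0\}$, and the leading coefficient is strictly positive. Apply the same analysis with $C=C_0$ and $w=v$ to $Z_v(\beta)=Z^{C_0}_{v,v}(\beta)+(\text{lower order})$: here $k_{C_0}=\max\{|\critical(\mu)|\colon\mu\in B^*C_0\}=k_{\max}$, the global maximum, so $[Z_v]=[\Phi_{e^{\beta_v}}^{k_{\max}}]$. (One also needs $\beta_v=\beta_{C_0}$ if $C_0$ is itself critical, and in general that $e^{\beta_v}=\max_{C\in B^0}\rho(A_C)$; this is a direct consequence of the path description of $Z_v$ and Proposition \ref{prop:simplepole}.)

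\textbf{Conclusion and the dichotomy.} For $C\in\crit{v}(E)$, Corollary \ref{cor:coefficients} gives $\|\harm^C\|_1=\sum_{w\in S_C}\lim_{\beta\to\beta_v+0}Z_{w,v}^C(\beta)/Z_v(\beta)$, provided the limit in \eqref{main:existence} exists; to dispose of the latter, I would first prove that for every $w\in E^0$ the ratio $Z_{w,v}(\beta)/Z_v(\beta)=\varphi_{\beta,v}(p_w)$ converges (it equals a ratio of two functions in $\cF_1$, hence converges by Remark \ref{rmk:totalorder}) and that $\varphi_{\beta,v}(\delta_v)=1/Z_v(\beta)\to0$ by Remark \ref{rmk:diverge}, then apply Proposition \ref{prop:GeneralLimit} to conclude \eqref{main:existence}; by Corollary \ref{cor:coefficients} the limit is automatically supported in $E^\infty$ and has the stated convex form with $\lambda_C=\|\harm^C\|_1$. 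Now $\lambda_C>0$ iff some $w\in S_C$ has $\lim Z_{w,v}^C(\beta)/Z_v(\beta)>0$ iff $[Z_{w,v}^C]=[Z_v]$ for some such $w$ (since $[Z^C_{w,v}]\preceq[Z_v]$ always and the limit is $0$ when the class is strictly smaller, by Remark \ref{rmk:totalorder}), iff $k_C=k_{\max}$, iff there is a path $\mu\in CB^*C_0$ containing the maximal number of critical components, i.e.\ a maximal path $\mu\in CB^*C_0$. \textbf{Main obstacle.} The technical heart, and the step I expect to require the most care, is the bookkeeping that the generating function of $X(\tilde\mu,w,v)$ has equivalence class exactly $[\Phi_{e^{\beta_v}}^{|\critical(\varepsilon(\tilde\mu))|}]$: one must check that each critical component on $\mu$ contributes exactly one simple pole at $z=e^{-\beta_v}$ with \emph{positive} residue (Proposition \ref{prop:simplepole} handles this), that non-critical components and the finitely many inflating edges contribute only bounded positive factors, and — crucially — that summing over the fibers $\varepsilon^{-1}(\mu)$ and over $\mu$ never produces cancellation. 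The last point is automatic because all series have nonnegative coefficients, but stating it cleanly via Lemma \ref{lem:PropertiesEquiv2} (which only asserts $F_1+F_2\approx G_1+G_2$, with positivity of the ratio inherited from positivity of coefficients) is the delicate part of the write-up.
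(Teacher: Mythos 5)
Your proposal is correct and follows essentially the same route as the paper's proof: the path decomposition of Lemma \ref{lem:decomposition}, the identification $\cZ_\mu=[\Phi_{x_v}]^{|\critical(\mu)|}$ via the Perron--Frobenius simple pole, the comparison of the exponents $M_C=\max\{|\critical(\mu)| : \mu\in CB^*C_0\}$ and $M=\max\{|\critical(\mu)| : \mu\in B^*C_0\}$, and the upgrade from convergence of the $\varphi_{\beta,v}(p_w)$ to weak* convergence via Proposition \ref{prop:GeneralLimit}. One small caveat: your claim that $[Z^C_{w,v}]\preceq[Z_v]$ for all $w\in S_C$ fails for $w\in C$ itself, where $\sum_{w'\in C}Z_{w,w'}Z_{w',v}$ overcounts paths and has class $[\Phi_{x_v}]^{M_C+1}$; for those vertices one should instead read positivity off $\harm^C_w=\varphi(p_w)=\lim_{\beta\to\beta_v+0}Z_{w,v}(\beta)/Z_v(\beta)$, as the paper does, which leaves the final criterion $M_C=M$ (equivalently, the existence of a maximal path in $CB^*C_0$) unchanged.
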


By Corollary \ref{cor:coefficients}, the KMS state $\varphi$ in Theorem \ref{thm:main} can be decomposed into a convex combination as in Equation \eqref{eqn:conclusion}. 
Theorem \ref{thm:main} gives the complete description of components that have actual contribution to the convex combination. 
In order to determine the coefficients, Corollary \ref{cor:coefficients} is applicable. In Section \ref{sec:example}, we visit examples in which we demonstrate  concrete calculations of coefficients. 

\begin{rmk}
In the setting of Theorem \ref{thm:main}, if $C_0$ is minimal, then $\crit{v}(E)=\{C_0\}$, so that we have $\varphi_v = \psi_{C_0}$. In this case, the path of length $0$ at $C_0$ is a maximal path in $C_0 B^*C_0$, since $C_0$ is the unique critical component in $B$. 
Hence, Theorem \ref{thm:main} trivially holds in this case. 
\end{rmk}

Now we start proving Theorem \ref{thm:main}. Until the end of this subsection, we fix a finite graph $E$ and a vertex $v \in E^0$ with $\beta_v>0$. In addition, let $x_v = e^{-\beta_v}$, and let $C_0 \in \pi(E)$ be the component containing $v$. 

\begin{prop} \label{prop:partfuncformula}
Let $C \in \tilde{B}(E,C_0)^0$ and $w \in C$. For $\tilde{\mu}=e_1e_2\cdots e_n \in C\tilde{B}^*C_0$, let 
\[ Z_{\tilde{\mu}}(\beta) = \sum_{\nu \in X(\tilde{\mu},w,v)} e^{-\beta |\nu|}. \]
Then, we have 
\[ Z_{\tilde{\mu}}(\beta) = Z_{w,s(e_1)}(\beta) e^{-\beta} Z_{r(e_1), s(e_2)}(\beta) e^{-\beta} \cdots e^{-\beta} Z_{r(e_n),v}(\beta) \]
for $\tilde{\mu}=e_0e_1\cdots e_n \in C\tilde{B}^*C_0$. Moreover, we have 
\[
Z_{w,v}(\beta) = \sum_{\mu \in CB^*C_0} \sum_{\tilde{\mu} \in \varepsilon^{-1}(\mu)} Z_{\tilde{\mu}}(\beta). 
\]
\end{prop}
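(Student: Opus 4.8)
The plan is to read off both identities directly from the disjoint path decomposition recorded in Lemma \ref{lem:decomposition}, using only that a sum of nonnegative terms may be rearranged and distributed over a product freely (Tonelli); there are no convergence issues since every quantity involved lies in $(0,\infty]$.

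First I would fix $\tilde{\mu} = e_1 e_2 \cdots e_n \in C\tilde{B}^*C_0$ and unwind Definition \ref{dfn:Btilde}: the assignment $(\nu_1,\nu_2,\ldots,\nu_{n+1}) \mapsto \nu_1 e_1 \nu_2 e_2 \cdots \nu_n e_n \nu_{n+1}$ is a bijection from the product set $wE^*s(e_1) \times r(e_1)E^*s(e_2) \times \cdots \times r(e_{n-1})E^*s(e_n) \times r(e_n)E^*v$ onto $X(\tilde{\mu},w,v)$. Surjectivity is the definition of $X(\tilde{\mu},w,v)$, and injectivity follows because the edges $e_1,\ldots,e_n$ are prescribed and each $\nu_i$ is a path inside a single component (in $E_{S(e_i)}$, resp. $E_{C_0}$), so the $e_i$ occupy exactly the transitions between components; this is the same uniqueness observation used in the proof of Lemma \ref{lem:decomposition}.

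Under this bijection the length is additive, $|\nu_1 e_1 \cdots e_n \nu_{n+1}| = n + \sum_{i=1}^{n+1}|\nu_i|$, so $e^{-\beta|\nu|} = e^{-n\beta}\prod_{i=1}^{n+1} e^{-\beta|\nu_i|}$. Summing this over the product set and distributing the nonnegative sum over the product factors it as $Z_{w,s(e_1)}(\beta)\, e^{-\beta} Z_{r(e_1),s(e_2)}(\beta)\, e^{-\beta} \cdots e^{-\beta} Z_{r(e_n),v}(\beta)$, which is the first claim. The degenerate case $n=0$ forces $C = C_0$ and $\tilde{\mu}$ to be the empty path at $C_0$, where $X(\tilde{\mu},w,v) = wE^*v$ and the formula reads $Z_{\tilde{\mu}}(\beta) = Z_{w,v}(\beta)$; this is the only point where any separate bookkeeping is needed.

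For the second identity I would simply apply Lemma \ref{lem:decomposition}, which writes $wE^*v$ as the disjoint union $\bigsqcup_{\mu \in CB^*C_0}\bigsqcup_{\tilde{\mu} \in \varepsilon^{-1}(\mu)} X(\tilde{\mu},w,v)$, and then sum $e^{-\beta|\nu|}$ over $\nu \in wE^*v$, grouping the (possibly infinite, nonnegative) sum according to this partition. This yields $Z_{w,v}(\beta) = \sum_{\mu \in CB^*C_0}\sum_{\tilde{\mu} \in \varepsilon^{-1}(\mu)} Z_{\tilde{\mu}}(\beta)$. I do not expect any real obstacle here: the content of the proposition is purely combinatorial, and once the bijection in the second paragraph is in hand everything reduces to the path decomposition already established.
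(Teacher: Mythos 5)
Your proposal is correct and follows exactly the paper's route: the paper's own proof is a one-line remark that the first identity follows from the definition of $X(\tilde{\mu},w,v)$ and the second from Lemma \ref{lem:decomposition}, and your argument simply fills in the same two steps (the product bijection with additivity of lengths, then summing over the disjoint union). No issues.
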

\begin{proof}
The first claim follows from the definition of $X(\tilde{\mu}, w, v)$, and the second claim follows 
from Lemma \ref{lem:decomposition}.  
\end{proof}

\begin{cor} \label{cor:generatingfunctions}
Let $Z=Z_{w,v},\ Z_v,\ Z_v^C$, or $Z_{w,v}^C$ for some $w,v \in E^0$ and $C \in \pi(E)$. Then, we have $Z \circ (-\log) \in \cF_1$. 
\end{cor}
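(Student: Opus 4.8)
The plan is to reduce all four cases to the single generating function $Z_{w,v}$, to recognise $Z_{w,v}\circ(-\log)$ (with $x=e^{-\beta}$) as an entry of the resolvent $(I-xA_E)^{-1}$ — hence as a rational function — and then to invoke the remark recorded just after the definition of $\cF_1$: an analytic function with an $n$-th order pole at its critical point $x_F$ lies in $\cF_1$ and represents $[\Phi_{x_F}^n]$.

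For the reduction I would first note that $\cF_1$ is closed under finite sums, which is Lemma \ref{lem:PropertiesEquiv2}, and under products: if $F_i\approx\Phi_{\alpha_i}^{n_i}$ for $i=1,2$, then $F_1F_2\approx\Phi_{\alpha_1}^{n_1}\Phi_{\alpha_2}^{n_2}$ by Lemma \ref{lem:PropertiesEquiv}\eqref{equiv:multiplication2}, and this product equals $\Phi_{\alpha_1}^{n_1+n_2}$ when $\alpha_1=\alpha_2$ and is $\approx$ the factor of smaller $x$-value when $\alpha_1\neq\alpha_2$ by Lemma \ref{lem:PropertiesEquiv}\eqref{equiv:multiplication1}; in either case it lies in $\cF_1$. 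Since $\cF_0$ is itself closed under sums and products, and
\[ Z_v=\sum_{w\in E^0}Z_{w,v},\qquad Z_v^C=\sum_{w\in C}Z_{w,v},\qquad Z_{w,v}^C=\sum_{w'\in C}Z_{w,w'}Z_{w',v}, \]
it then suffices to prove $Z_{w,v}\circ(-\log)\in\cF_1$ for all $w,v$ with $wE^*v\neq\emptyset$. (In these finite sums the terms $Z_{a,b}$ with $aE^*b=\emptyset$ vanish and are dropped, while any term with $aE^*b$ finite and nonempty gives a polynomial, which is harmlessly absorbed via Lemma \ref{lem:PropertiesEquiv}\eqref{equiv:addition} and \eqref{equiv:multiplication1} as long as at least one genuine $\cF_1$-term survives — which it does under the hypotheses in which $Z_v$, $Z_v^C$, $Z_{w,v}^C$ are invoked.)

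For the remaining case, fix $w,v$ with $wE^*v\neq\emptyset$ and put $F:=Z_{w,v}\circ(-\log)$, so that $F(x)=\sum_{k\geq0}|wE^kv|\,x^k=\sum_{k\geq0}[A_E^k]_{w,v}\,x^k$; as a power series this is the $(w,v)$-entry of $(I-xA_E)^{-1}$, a rational function of $x$ with nonnegative Taylor coefficients. By Pringsheim's theorem on such series, the radius of convergence $x_F$ is a singular point of $F$, hence — $F$ being rational — a pole of some finite order $n\geq1$; in particular $F(x)\to\infty$ as $x\to x_F-0$, so $F$ is a continuous monotone increasing map $\bR_+\to(0,\infty]$ (positivity from $wE^*v\neq\emptyset$) that is finite precisely for $x<x_F$, whence $F\in\cF_0$. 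As $F$ is rational it is analytic on a punctured neighbourhood of $x_F$ with an $n$-th order pole there, so the remark quoted above gives $F\in\cF_1$, with $[F]=[\Phi_{x_F}^n]$.

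One could instead stay within the framework already built: Proposition \ref{prop:partfuncformula} writes $Z_{w,v}$ as a finite sum of products of within-component generating functions and factors $e^{-\beta}$, Proposition \ref{prop:simplepole} places each within-component factor in $\cF_1$, and the closure properties above conclude. Either way, the one step carrying genuine content is the assurance that $x_F$ is an honest pole — supplied by positivity of the Taylor coefficients (Pringsheim) or by positivity of the residue in Proposition \ref{prop:simplepole} — and this is also where the one implicit restriction of the statement lives: when the relevant set of finite paths is finite, $F$ is a polynomial with $x_F=\infty$ and does not lie in $\cF_1$. That degeneracy cannot occur for $Z_v$ under the standing assumption $\beta_v>0$, and does not occur in the applications of the corollary, so I would note it and proceed.
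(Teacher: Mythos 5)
Your argument is correct, but your primary route is genuinely different from the paper's. The paper proves the corollary by feeding $Z_{w,v}$ through Proposition \ref{prop:partfuncformula}: each $Z_{w,v}$ is a finite sum, over paths in $\tilde{B}$, of products of within-component generating functions and factors $e^{-\beta}$; each within-component factor lies in $\cF_1$ by the Perron--Frobenius computation of Proposition \ref{prop:simplepole} (simple pole at $\rho(A_C)^{-1}$ with positive residue); and closure of $\cF_1$ under addition and multiplication (Lemmas \ref{lem:PropertiesEquiv} and \ref{lem:PropertiesEquiv2}) finishes, with the other three generating functions reduced to sums of products of $Z_{w,v}$'s exactly as you do. You instead treat $Z_{w,v}\circ(-\log)$ globally as the $(w,v)$-entry of the rational matrix $(I-xA_E)^{-1}$ and invoke Pringsheim's theorem to certify that the radius of convergence $x_F$ is an honest pole of finite order, then appeal to the remark following the definition of $\cF_1$. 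Both are valid; what each buys is different. Your Pringsheim argument is shorter, bypasses the component decomposition entirely, and isolates the one point of genuine content (that $x_F$ is a pole and not some worse singularity), but it only certifies membership in $\cF_1$ without expressing the class in terms of graph data. The paper's route is longer but is not wasted work: the same decomposition immediately yields $\cZ_\mu=[\Phi_{x_v}]^{|\critical(\mu)|}$ in Lemmas \ref{lem:summation} and \ref{lem:simplify}, which is what the proof of Theorem \ref{thm:main} actually consumes. You are also right to flag the degenerate case: when $wE^*v$ is finite and nonempty, $F$ is a nonconstant polynomial with $x_F=\infty$ and fails to be equivalent to any $\Phi_\alpha^n$, so the corollary carries an implicit nondegeneracy hypothesis; the paper's proof is silent on this, and your explicit remark that the degeneracy does not arise in the intended applications (where $\beta_v>0$ forces a genuine pole in $Z_v$) is a worthwhile addition rather than a defect.
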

\begin{proof}
Let $F(x) = Z(-\log x)$. To show the continuity of $F$, it suffices to show that $x_F= \infty$ or $F(x_F)= \infty$. 
If $F=Z_{w,v} \circ (-\log)$ for some $w \in E^0$ and if $v,w$ belong to the same component, then $x_F= \infty$ or 
$F(x_F) = \infty$ by Proposition \ref{prop:simplepole}, which implies $F \in \cF_1$. If $F=Z_{w,v} \circ (-\log)$ for some $w \in E^0$, then $F \in \cF_1$ by Proposition \ref{prop:partfuncformula} and the fact that $\cF_1$ is closed under the addition and the multiplication. 
Other generating functions can be decomposed into summations of products of generating functions of the form $Z_{w,v}$, which implies $F \in \cF_1$ in all cases. 
\end{proof}

\begin{dfn}
For a generating function $Z$ in Corollary \ref{cor:generatingfunctions}, the equivalence class of $Z \circ (-\log)$ is denoted by $[Z]$. 
\end{dfn}

\begin{lem} \label{lem:partitionfunc}
Let $C \in \pi(E)$ and let $w, w' \in C$. Suppose $\rho(A_C)>0$ and let $\rho=\rho(A_C)^{-1}$. Then, we have
$[Z_{w',w}] = [Z_{w}^C] = [\Phi_\rho]$. 
\end{lem}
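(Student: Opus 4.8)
The plan is to deduce both equalities from Proposition~\ref{prop:simplepole} applied to the graph $E_C$, together with the calculus of the classes $[\cdot]$ developed above. The first thing I would record is the elementary fact that, since $w,w'\in C$ and $C$ is a component, every finite path in $E$ from $w'$ to $w$ has all of its vertices in $C$: if such a path visits a vertex $u$, then $w'E^*u\neq\emptyset$ and $uE^*w\neq\emptyset$, and since $wE^*w'\neq\emptyset$ we get $uE^*w'\neq\emptyset$ as well, so $u\sim w'$, i.e.\ $u\in C$. Consequently $E_C$ is strongly connected, the generating function $Z_{w',w}$ is unchanged if computed inside $E_C$, and $Z_w^C$ is precisely the partition function of $w$ in $E_C$ (as already noted right after its definition).

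Since $\rho(A_C)>0$ by hypothesis, Proposition~\ref{prop:simplepole} applied to $E_C$ shows that $Z_{w',w}(-\log z)$ has an analytic continuation to a punctured neighbourhood of $z=\rho(A_C)^{-1}=\rho$ with a simple pole at $z=\rho$; moreover $Z_{w',w}(-\log z)$ is finite for $0<z<\rho$ and tends to $\infty$ as $z\to\rho-0$, so $x_{Z_{w',w}\circ(-\log)}=\rho$. By the observation following the definition of $\cF_1$ — a function in $\cF_0$ that has an $n$-th pole at $x_F$ has class $[\Phi_{x_F}^n]$ — this already gives $[Z_{w',w}]=[\Phi_\rho]$.

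For the remaining equality I would write $Z_w^C=\sum_{w''\in C}Z_{w'',w}$. By the previous paragraph each summand satisfies $Z_{w'',w}\circ(-\log)\approx\Phi_\rho$, so Lemma~\ref{lem:PropertiesEquiv2}, applied inductively over $w''\in C$, gives $Z_w^C\circ(-\log)\in\cF_1$ and $Z_w^C\circ(-\log)\approx\sum_{w''\in C}\Phi_\rho=|C|\,\Phi_\rho\approx\Phi_\rho$, whence $[Z_w^C]=[\Phi_\rho]$. I do not expect any genuine obstacle here; the one point that needs a little care is that the simple poles of the finitely many summands of $Z_w^C(-\log z)$ at $z=\rho$ really are present and do not cancel — this is exactly where the positivity assertion of Proposition~\ref{prop:simplepole} is used (the residues $-\rho(A_C)^{-1}\langle P\xi_w,\xi_{w''}\rangle$ are all of the same, negative, sign), and it is what makes the application of Lemma~\ref{lem:PropertiesEquiv2} legitimate rather than producing a spurious degeneration of the class.
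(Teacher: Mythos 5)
Your proposal is correct and takes essentially the same route as the paper: apply Proposition~\ref{prop:simplepole} to the strongly connected graph $E_C$ to get $[Z_{w',w}]=[\Phi_\rho]$, then write $Z_w^C=\sum_{w''\in C}Z_{w'',w}$ and sum the classes via Lemma~\ref{lem:PropertiesEquiv2}. Your two extra touches --- the explicit check that paths between vertices of $C$ stay inside $C$ (so $Z_{w',w}$ may be computed in $E_C$), and the worry about cancellation of poles --- are welcome but not load-bearing: the positivity in Proposition~\ref{prop:simplepole} is what ensures each $Z_{w'',w}$ genuinely has a pole at $z=\rho$, while non-cancellation in the sum is automatic because every element of $\cF_0$ is positive-valued.
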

\begin{proof}
Applying Proposition \ref{prop:simplepole} to the strongly connected graph $E_C$, we have $[Z_{w',w}]=[\Phi_\rho]$ by .  
Since $E^*_Cw = \bigsqcup_{w' \in C} w'E^*w$, we have $Z_w^C = \sum_{w' \in C} Z_{w',w}$. Hence, 
\[ [Z_w^C] = \sum_{w' \in C} [Z_{w',w}] = |C| [\Phi_\rho] = [\Phi_\rho], \]
which completes the proof. 
\end{proof}

For $C \in \pi(E)$, $\rho(A_C)=0$ if and only if $|C|=1$. For such component $C$, we formally consider $\rho=\rho(A_C)^{-1}=\infty$ and 
 $[Z_w^C]=[\Phi_{\rho}]=1$. 

\begin{dfn} \label{def:product} 
For $C \in \pi(E)$ and $w,w' \in C$, the class $[Z_{w',w}]$ does not depend on the choice of $w,w'$, which is denoted by $\cZ(C)$. 
Moreover, for $\mu = e_1\cdots e_n \in B^*C_0$, let 
\[ \cZ_{\mu} := \cZ(C_0) \prod_{i=1}^n \cZ(S(e_i)) \in \cF. \]
\end{dfn}

Note that $\cZ(C) = [\Phi_\rho] \in \cF$, where $\rho=\rho(A_C)^{-1}$ by Lemma \ref{lem:partitionfunc}. 

\begin{lem} \label{lem:summation}
Let $C \in B(E,C_0)^0$ and let $w \in C$. 
Then, we have  
\[ [Z_{w,v}] = \sum_{\mu \in CB^*C_0} \cZ_\mu \mbox{ and } [Z_v] = \sum_{\mu \in B^*C_0} \cZ_\mu. \]
\end{lem}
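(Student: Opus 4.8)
The plan is to transport the function-level identity of Proposition~\ref{prop:partfuncformula} to an identity in $\cF$ term by term, exploiting that the relation $\approx$ records only the location and order of the pole and, in particular, collapses positive integer multiplicities --- this is the precise sense in which the number of edges between components turns out to be irrelevant.

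First I would fix $\tilde\mu = e_1\cdots e_n \in C\tilde B^*C_0$ and show that $Z_{\tilde\mu}\circ(-\log)\in\cF_1$ with $[Z_{\tilde\mu}] = \cZ_{\varepsilon(\tilde\mu)}$. By the product formula in Proposition~\ref{prop:partfuncformula}, $Z_{\tilde\mu}$ is a product of the functions $Z_{w,s(e_1)},\, Z_{r(e_1),s(e_2)},\dots,Z_{r(e_{n-1}),s(e_n)},\, Z_{r(e_n),v}$ together with $n$ factors $e^{-\beta}$. Since $\tilde\mu$ is a path in $\tilde B$, the two endpoints of each of these $Z$-factors lie in a single component --- namely $S(e_1),S(e_2),\dots,S(e_n),C_0$ respectively --- so each $Z$-factor is a within-component generating function; by Lemma~\ref{lem:partitionfunc} (or the convention $[Z_w^C]=1$ when the component is trivial) it lies in $\cF_1$ with class $\cZ(S(e_i))$, respectively $\cZ(C_0)$. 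Under $\beta=-\log x$ each factor $e^{-\beta}$ becomes the continuous monotone increasing function $x$, which is absorbed by Lemma~\ref{lem:PropertiesEquiv}\eqref{equiv:scalar}. Using that $\cF_1$ is closed under multiplication and that $\approx$ is multiplicative (Lemma~\ref{lem:PropertiesEquiv}\eqref{equiv:multiplication2}), one gets $Z_{\tilde\mu}\circ(-\log)\in\cF_1$ and $[Z_{\tilde\mu}] = \cZ(C_0)\prod_{i=1}^n\cZ(S(e_i)) = \cZ_{\varepsilon(\tilde\mu)}$; crucially this depends on $\tilde\mu$ only through $\mu=\varepsilon(\tilde\mu)$.

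Next I would assemble the first identity. Because $B$ (equivalently $\tilde B$) has no directed cycles, the index sets $CB^*C_0$ and the fibres $\varepsilon^{-1}(\mu)$ are finite, so Proposition~\ref{prop:partfuncformula} exhibits $Z_{w,v} = \sum_{\mu\in CB^*C_0}\sum_{\tilde\mu\in\varepsilon^{-1}(\mu)}Z_{\tilde\mu}$ as a finite sum of elements of $\cF_1$; by the definition of the addition on $\cF$ induced from $\cF_1$ (Lemma~\ref{lem:PropertiesEquiv2} and Remark~\ref{rmk:totalorder}), passing to classes gives $[Z_{w,v}] = \sum_{\mu}\sum_{\tilde\mu\in\varepsilon^{-1}(\mu)}\cZ_\mu = \sum_{\mu\in CB^*C_0}|\varepsilon^{-1}(\mu)|\,\cZ_\mu$. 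Finally, every element of $\cF$ equals $[\Phi_\alpha^k]$ for some $\alpha\in\bR_+$, $k\in\bN$, and for any positive integer $m$ one has $m[\Phi_\alpha^k] = [m\Phi_\alpha^k] = [\Phi_\alpha^k]$ directly from the definition of $\approx$; hence the multiplicities $|\varepsilon^{-1}(\mu)|$ disappear and $[Z_{w,v}] = \sum_{\mu\in CB^*C_0}\cZ_\mu$.

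For the second identity I would write $E^*v = \bigsqcup_w wE^*v$, the union over the $w\in E^0$ with $wE^*v\neq\emptyset$; by strong connectedness of components, such a $w$ is exactly one whose component lies in $B^0$, so $Z_v = \sum_{C\in B^0}\sum_{w\in C}Z_{w,v}$. Passing to classes, invoking the first identity (whose right-hand side is independent of $w\in C$), and again absorbing the factor $|C|$ as above, we obtain $[Z_v] = \sum_{C\in B^0}\sum_{\mu\in CB^*C_0}\cZ_\mu = \sum_{\mu\in B^*C_0}\cZ_\mu$, using $B^*C_0 = \bigsqcup_{C\in B^0}CB^*C_0$. I expect the only genuinely delicate points to be the bookkeeping one --- checking that each $Z$-factor of $Z_{\tilde\mu}$ really is supported in a single component, so that Lemma~\ref{lem:partitionfunc} and Proposition~\ref{prop:simplepole} apply --- and making explicit that $\approx$ kills positive integer multiples, which is the formal incarnation of the remark that the edge multiplicities between components are unnecessary; everything else is routine given Lemmas~\ref{lem:PropertiesEquiv} and~\ref{lem:PropertiesEquiv2} and the finiteness of all index sets in play.
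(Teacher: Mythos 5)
Your proposal is correct and follows essentially the same route as the paper: reduce to the product formula of Proposition~\ref{prop:partfuncformula}, identify $[Z_{\tilde\mu}]$ with $\cZ_{\varepsilon(\tilde\mu)}$ via Lemma~\ref{lem:partitionfunc} and Lemma~\ref{lem:PropertiesEquiv}~\eqref{equiv:scalar}, and then absorb the positive integer multiplicities $|\varepsilon^{-1}(\mu)|$ and $|C|$ using the fact that $\approx$ kills positive constant multiples. Your write-up is somewhat more explicit than the paper's (notably on the treatment of the $e^{-\beta}$ factors and of trivial components), but there is no substantive difference.
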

\begin{proof}
For $\tilde{\mu}=e_1e_2\cdots e_n \in C\tilde{B}^*C_0$, let $Z_{\tilde{\mu}}$ be the function in Proposition \ref{prop:partfuncformula},  
and let $[Z_{\tilde{\mu}}]$ denote the equivalence class of $Z_{\tilde{\mu}} \circ (-\log)$. 
By Propoisition \ref{prop:partfuncformula}, we have
\[ [ Z_{\tilde{\mu}}] = \cZ(C_0) \prod_{i=1}^n \cZ(S(e_i)) = \cZ_{\varepsilon(\tilde{\mu})}. \]
Here, Lemma \ref{lem:partitionfunc} and Lemma \ref{lem:PropertiesEquiv} \eqref{equiv:scalar} are applied. 
Hence, by Proposition \ref{prop:partfuncformula}, we have 
\[ [Z_{w,v}] = \sum_{\mu \in CB^*C_0} \sum_{\tilde{\mu} \in \varepsilon^{-1}(\mu)} [Z_{\tilde{\mu}}] 
= \sum_{\mu \in CB^*C_0} |\varepsilon^{-1}(\mu)| \cZ_{\mu} = \sum_{\mu \in CB^*C_0} \cZ_{\mu} . \]

In addition, 
\[ [Z_v] = \sum_{w \in E^0} [Z_{w,v}] = \sum_{C \in B^0} \sum_{w \in C} [Z_{w,v}] 
= \sum_{C \in B^0} |C| \sum_{\mu \in CB^*C_0} \cZ_{\mu} = \sum_{\mu \in B^*C_0} \cZ_{\mu}, \]
which completes the proof. 
\end{proof}

\begin{lem} \label{lem:simplify}
For $\mu \in B^*C_0$, we have
\[ \cZ_{\mu} = [\Phi_{x_v}]^{|\critical(\mu)|}. \]
\end{lem}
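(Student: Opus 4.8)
The plan is to unwind $\cZ_\mu$ into a single product of classes of the form $[\Phi_\alpha]$ indexed by the components that $\mu$ visits, and then to collapse that product using the multiplicative structure of $\cF$ recorded in Lemma~\ref{lem:PropertiesEquiv} and Remark~\ref{rmk:totalorder}. First I would observe that $B(E,C_0)$ is acyclic: a directed cycle $C_1\to\cdots\to C_k\to C_1$ in $B$ would force $C_1\sim C_2$ in $E$, contradicting the distinctness of components. Hence for $\mu=e_1\cdots e_n\in B^{*}C_0$ the components $C_0,S(e_1),\dots,S(e_n)$ are pairwise distinct and form a set $V(\mu)\subseteq B(E,C_0)^0$ with $|V(\mu)|=n+1$; by Lemma~\ref{lem:partitionfunc}, together with the convention $\cZ(C)=[\Phi_\infty]=[1]$ for a one-vertex component without a loop, we have $\cZ(C)=[\Phi_{\rho(A_C)^{-1}}]$ for every $C$, so Definition~\ref{def:product} rewrites as $\cZ_\mu=\prod_{C\in V(\mu)}[\Phi_{\rho(A_C)^{-1}}]$.

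Second, I would prove an elementary collapsing statement in $\cF$: for any finite family $(\alpha_j)_j$ in $(0,\infty]$ one has $\prod_j[\Phi_{\alpha_j}]=[\Phi_{\alpha}]^{m}$ with $\alpha=\min_j\alpha_j$ and $m=\#\{j:\alpha_j=\alpha\}$. This follows by induction on the size of the family from parts~\eqref{equiv:multiplication1} and~\eqref{equiv:multiplication2} of Lemma~\ref{lem:PropertiesEquiv} (a factor of strictly smaller $x$-value absorbs the rest, since $x_{\Phi_\alpha}=\alpha$), together with the identity $\Phi_\alpha^{k}\Phi_\alpha^{l}=\Phi_\alpha^{k+l}$ and the fact that $[\Phi_\infty]=[1]$ is the multiplicative unit. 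Applying this with $\alpha_j=\rho(A_C)^{-1}$ for $C\in V(\mu)$ reduces the claim to the single identity $\max_{C\in V(\mu)}\rho(A_C)=e^{\beta_v}$, because then $\min_{C\in V(\mu)}\rho(A_C)^{-1}=e^{-\beta_v}=x_v$ and the multiplicity $m$ equals $\#\{C\in V(\mu):\rho(A_C)=e^{\beta_v}\}=|\critical(\mu)|$ by Definition~\ref{def:maximal}~\eqref{item:path}.

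For the identity $\max_{C\in V(\mu)}\rho(A_C)=e^{\beta_v}$: the inequality ``$\le$'' holds for every $C\in B(E,C_0)^0$. Indeed $CE^{*}v\neq\emptyset$, so, using that $E_C$ is strongly connected, there is a path $\sigma$ from some $w\in C$ to $v$; concatenating closed paths at $w$ (which necessarily stay inside $C$) with $\sigma$ yields $Z_v(\beta)\ge Z_{w,v}(\beta)\ge e^{-\beta|\sigma|}Z_{w,w}(\beta)$, and if $\rho(A_C)>1$ then $Z_{w,w}\circ(-\log)$ has a pole at $z=\rho(A_C)^{-1}$ (Proposition~\ref{prop:simplepole} applied to $E_C$), so $Z_{w,w}(\beta)=\infty$ for $\beta\le\log\rho(A_C)$, forcing $\beta_v\ge\log\rho(A_C)$; if $\rho(A_C)\le1$ then $\rho(A_C)\le1<e^{\beta_v}$ since $\beta_v>0$. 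For ``$\ge$'' the input is precisely that $\mu$ visits some $C$ with $\rho(A_C)=e^{\beta_v}$, i.e.\ some critical component in the sense of Definition~\ref{def:maximal}~\eqref{item:critical}; this holds for all paths that feed into the proof of Theorem~\ref{thm:main}, namely those in $CB^{*}C_0$ with $C\in\crit{v}(E)$, since such a $C$ is critical. (A path visiting no critical component has $\max_{C\in V(\mu)}\rho(A_C)<e^{\beta_v}$, whence $\cZ_\mu\prec[\Phi_{x_v}]$, consistent with $\critical(\mu)=\emptyset$; such paths contribute strictly below $[\Phi_{x_v}]$ and do not affect the later evaluation of $[Z_v]$ and $[Z_v^{C}]$.) Combining the two inequalities with the collapsing statement gives $\cZ_\mu=[\Phi_{x_v}]^{|\critical(\mu)|}$.

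The main obstacle is this last identity $\max_{C\in V(\mu)}\rho(A_C)=e^{\beta_v}$: its ``$\le$'' half is a routine Perron--Frobenius estimate translating a spectral-radius bound into divergence of $Z_v$, but its ``$\ge$'' half is exactly the content of criticality, so the clean formula is to be understood for those $\mu$ that meet a critical component — in particular all $\mu\in CB^{*}C_0$ with $C\in\crit{v}(E)$ — which is all that is needed downstream, where this lemma combined with the addition rule of Remark~\ref{rmk:totalorder} turns the sums of Lemma~\ref{lem:summation} into powers of $[\Phi_{x_v}]$.
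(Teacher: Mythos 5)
Your proof is correct and follows essentially the same route as the paper's: write $\cZ_\mu$ as a product of the classes $[\Phi_{\rho(A_C)^{-1}}]$ over the components visited by $\mu$, observe that each $\rho(A_C)^{-1}$ is at least $x_v$ with equality exactly at the critical components, and collapse the product via Lemma~\ref{lem:PropertiesEquiv}. Your explicit treatment of the degenerate case $\critical(\mu)=\emptyset$ (where the literal identity fails and only $\cZ_\mu\prec[\Phi_{x_v}]$ survives, which is all that the proof of Theorem~\ref{thm:main} actually uses) is in fact more careful than the paper's own argument, which tacitly assumes a critical component occurs along $\mu$.
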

\begin{proof}
Let $\mu=e_1 \cdots e_n$, let $\rho_i = \rho(A_{S(e_i)})^{-1}$ for $i=1,\cdots,n$, and let $\rho_0 = \rho(A_{C_0})^{-1}$. 
Then, by Lemma \ref{lem:partitionfunc}, we have $\cZ(S(e_i))=[\Phi_{\rho_i}]$ for $i=1,\cdots,n$ and $\cZ(C_0)=[\Phi_{\rho_0}]$. 
In addition, $x_{\Phi_{\rho_i}} = \rho_i$ in Definition \ref{def:functions} is greater than or equal to $x_v$ by Equation \eqref{eqn:seneta}, and 
$x_{\Phi_{\rho_i}} = x_v$ if and only if $C \in \critical{\mu}$. Hence, 
\[ \cZ_{\mu} = \cZ(C_0) \prod_{i=1}^n \cZ(S(e_i)) = \prod_{i=0}^n [\Phi_{\rho_i}] = [\Phi_{x_v}]^{|\critical(\mu)|} \]
by Lemma \ref{lem:PropertiesEquiv} \eqref{equiv:multiplication1} 
\end{proof}

Now we are ready to prove Theorem \ref{thm:main}. 

\begin{proof}[Proof of Theorem \ref{thm:main}]
Fix $w \in E^0$. 
We first claim that the limit 
\begin{equation} \label{eqn:limit}
\lim_{\beta \to \beta_v+0} \varphi_{\beta,v} (p_w)
\end{equation}
exists for any $w \in E^0$. 
If $wE^*v = \emptyset$, then the claim holds and the limit is zero by Proposition \ref{prop:measure}. Suppose $wE^*v \neq \emptyset$. 
Then, by Proposition \ref{prop:measure}, we have 
\[ \varphi_{\beta,v}(p_w) = \frac{Z_{w,v}(\beta)}{Z_v(\beta)}. \]
By Lemma \ref{lem:summation} and Lemma \ref{lem:simplify} combined with Remark \ref{rmk:totalorder}, we have 
\[  
[Z_{w,v}] = \sum_{\mu \in CB^*C_0} \cZ_\mu 
= \sum_{\mu \in CB^*C_0} [\Phi_{x_v}]^{|\critical(\mu)|} = [\Phi_{x_v}]^{M_C}, \]
where $M_C = \max \{ |\critical(\mu)| \colon \mu \in CB^*C_0 \}$. Similarly, we have 
\[ [ Z_v] = \sum_{\mu \in B^*C_0} \cZ_\mu = [\Phi_{x_v}]^M, \]
where $M = \max \{ |\critical(\nu)| \colon \nu \in B^*C_0 \}$. 
In particular, $[Z_{w,v}], [Z_v] \in \cF_1$, which implies that  the limit in Equation \eqref{eqn:limit} exists by Remark \ref{rmk:totalorder}. 
In addition, we always have $[Z_{w,v}] \preceq [Z_v]$ since $M_C \leq M$. 

The limit in Equation \eqref{eqn:limit} is nonzero if and only if $\Phi_{x_v}^M \approx \Phi_{x_v}^{M_C}$, which is equivalent to $M=M_C$. 
In conclusion, we have
\begin{equation} \label{eqn:conclusion2}
\lim_{\beta \to \beta_v+0} \varphi_{\beta,v} (p_w) = \begin{cases} 0 & \mbox{if } M_C < M \\ \mbox{positive} & \mbox{if } M_C=M. \end{cases}
\end{equation}
Hence \eqref{main:existence} of Theorem \ref{thm:main} follows from Proposition \ref{prop:GeneralLimit} and Remark \ref{rmk:diverge} combined with Proposition \ref{prop:measure}. 
In addition, \eqref{main:convex} follows from Equation \eqref{eqn:conclusion2} combined with Corollary \ref{cor:coefficients}. 
\end{proof}

\section{Examples} \label{sec:example}
We visit two examples of the convex decomposition of $\varphi_v$ in Theorem \ref{thm:main}.  
We introduce the next definition just to simplify notations. 
For a natural number $n$, let
\[ F_n(\beta) := \sum_{k=0}^\infty n^ke^{-\beta k} = \frac{1}{1-ne^{-\beta}}. \]
Note that $F_n(-\log x)=\Phi_{n^{-1}}(x)$. The role of $F_n$ is almost the same as that of $\Phi_{\alpha}$ in Section \ref{sec:main}. 

\begin{exm} \label{example:intermediate}
We examine the effect of non-minimal critical components in the convex decomposition of $\varphi_v$.  
Consider the graph $E$ with $E^0=\{v_1,v_2, u_1,u_2,u_3,u_4, w_1,w_2,w_3,w_4\}$ depicted as below: 

\begin{equation*}
\begin{tikzcd}[
arrow style=tikz,
>={latex} 
]
|[shape=circle,draw]| w_1 \arrow[rd]  \arrow[loop right, xscale=2, yscale=5] \arrow[loop right, xscale=3, yscale=10] & & & & 
|[shape=circle,draw]| w_2 \arrow[ld] \arrow[loop left, xscale=2, yscale=5] \arrow[loop left, xscale=3, yscale=10] & 
|[shape=circle,draw]| w_3 \arrow[rd]  \arrow[loop right, xscale=2, yscale=5] \arrow[loop right, xscale=3, yscale=10] & & & & 
|[shape=circle,draw]| w_4 \arrow[ld] \arrow[loop left, xscale=2, yscale=5] \arrow[loop left, xscale=3, yscale=10] \\
& |[shape=circle,draw]| u_1 \arrow[rd] \arrow[loop right, xscale=2, yscale=5] \arrow[loop right, xscale=3, yscale=10] & & 
|[shape=circle,draw]| u_2 \arrow[ld] \arrow[loop left, xscale=2, yscale=5] & & & 
|[shape=circle,draw]| u_3 \arrow[rd] \arrow[loop right, xscale=2, yscale=5] & & 
|[shape=circle,draw]| u_4 \arrow[ld] \arrow[loop left, xscale=2, yscale=5]  \arrow[loop left, xscale=3, yscale=10] & \\
& & |[shape=circle,draw]| v_1 \arrow[rrrrr, bend left=10]  & & & & &  |[shape=circle,draw]| v_2 \arrow[lllll, bend left=10]& & 
\end{tikzcd}
\end{equation*}

In this graph,  we can see that $\pmc(E)=\crit{v_j}(E) = \{C_1,C_2,C_3,C_4\}$, where $j=1,2$ and $C_i = \{w_i\}$ for $i=1,2,3,4$. Hence type III extremal KMS states at $\beta=\log 2$ are precisely $\psi_{C_i}$ for $i=1,2,3,4$ by Theorem \ref{thm:aHLRS15}. 
Let $C_0=\{v_1,v_2\} \in \pi(E)$, and let $D_i=\{u_i\}$ for $i=1,2,3,4$. For each $i=1,2,3,4$, there is a unique path $\mu_i$ in $B(E,C_0)$ going through $C_i$, $D_i$ and $C_0$. Then, $\mu_1$ and $\mu_4$ are maximal paths while $\mu_2$ and $\mu_3$ are not. 
Hence, the KMS states $\varphi_{v_j}$ for $j=1,2$ in Theorem \ref{thm:main} 
are convex combinations of $\psi_{C_1}$ and $\psi_{C_4}$, by the effect of the intermediate critical components $D_1$ and $D_4$. Here, by a concrete calculation, we verify this consequence and determine the coefficients in the convex combinations. 

First, we calculate $Z_{v_1}$ and $Z_{w_i,v_1}$ for $i=1,2,3,4$. 
For a concrete calculation, Proposition \ref{prop:partfuncformula} is applicable. We can naturally idenfity $B$ and $\tilde{B}$ in this example, because there are no multiple edges between components. We have 

\begin{align*}
Z_{w_1,v_1} &= Z_{\mu_1}(\beta) = F_2(\beta)e^{-\beta}F_2(\beta)e^{-\beta}\sum_{k=0}^\infty e^{-2k\beta} = e^{-2\beta}F_1(2\beta)F_2(\beta)^2, \\
Z_{w_2,v_1} &= Z_{\mu_2}(\beta) = F_2(\beta)e^{-\beta}F_1(\beta)e^{-\beta}\sum_{k=0}^\infty e^{-2k\beta} = e^{-2\beta}F_1(2\beta)F_1(\beta)F_2(\beta), \\
Z_{w_3,v_1} &= Z_{\mu_3}(\beta) = F_2(\beta)e^{-\beta}F_1(\beta)e^{-\beta}\sum_{k=0}^\infty e^{-(2k+1)\beta} = e^{-3\beta}F_1(2\beta)F_1(\beta)F_2(\beta), \\
Z_{w_4,v_1} &= Z_{\mu_4}(\beta) = F_2(\beta)e^{-\beta}F_2(\beta)e^{-\beta}\sum_{k=0}^\infty e^{-(2k+1)\beta} = e^{-3\beta}F_1(2\beta)F_2(\beta)^2. 
\end{align*}

Let $\nu_i$ $(i=1,2,3,4)$ be the unique paths in $B$ going through $D_i$ and $C_0$. Let $\xi=C_0$, which considered as a path of length $0$. 
Then, we have

\begin{align*}
Z_{v_1}(\beta) =\ \ \ & \sum_{i=1}^4 Z_{\mu_i} + \sum_{i=1}^4 Z_{\nu_i} +Z_{\xi} 
= \sum_{i=1}^4 \left( Z_{\mu_i} + Z_{\nu_i} \right) + Z_\xi \\
=\ \ \ & \sum_{k=0}^\infty e^{-2k\beta} \left( e^{-\beta}F_2(\beta) + e^{-\beta}F_2(\beta)e^{-\beta}F_2(\beta)\right) \\
+& \sum_{k=0}^\infty e^{-2k\beta} \left( e^{-\beta}F_1(\beta) + e^{-\beta}F_1(\beta)e^{-\beta}F_2(\beta)\right) \\
+& \sum_{k=0}^\infty e^{-(2k+1)\beta} \left( e^{-\beta}F_1(\beta) + e^{-\beta}F_1(\beta)e^{-\beta}F_2(\beta)\right) \\
+&\sum_{k=0}^\infty e^{-(2k+1)\beta} \left( e^{-\beta}F_2(\beta) + e^{-\beta}F_2(\beta)e^{-\beta}F_2(\beta)\right) +F_1(\beta)\\
=\ \ \ & F_1(\beta) \left( 1 + e^{-\beta}F_1(\beta) + e^{-\beta}F_2(\beta) + e^{-2\beta}F_1(\beta)F_2(\beta) + e^{-2\beta} F_2(\beta)^2\right). 
\end{align*}

Hence, we conclude that 
\[ 
\lim_{\beta \to \log 2+0} \frac{Z_{w_1,v_1}(\beta)}{Z_{v_1}(\beta)} = \frac23,\ 
\lim_{\beta \to \log 2+0} \frac{Z_{w_2,v_1}(\beta)}{Z_{v_1}(\beta)} = \lim_{\beta \to \log 2+0} \frac{Z_{w_3,v_1}}{Z_{v_1}} = 0,\ 
\lim_{\beta \to \log 2+0} \frac{Z_{w_1,v_1}(\beta)}{Z_{v_1}(\beta)} = \frac13, 
\]
which implies that 
\[ \varphi_{v_1} = \frac23 \psi_{C_1} + \frac13 \psi_{C_4}. \]
In addition we can see that 
\[ \varphi_{v_2} = \frac13 \psi_{C_1} + \frac23 \psi_{C_4} \]
in exactly the same way. 
\end{exm}

\begin{exm} \label{example:further}
We examine an effect of minimal components whose spectral radius is less than $e^{\beta_v}$ in the convex decomposition of $\varphi_v$. 
Consider the graph $E$ with $E^0=\{v, w_1,w_2, u_1,u_2\}$ depicted as below: 
\begin{equation*}
\begin{tikzcd}[
arrow style=tikz,
>={latex} 
]
& |[shape=circle,draw]| u_1 \arrow[ld] \arrow[rr, bend left=10]  \arrow[rr, bend left=20]  & &  
|[shape=circle,draw]| u_2 \arrow[rd] \arrow[ll, bend left=10] & \\
|[shape=circle,draw]| w_1 \arrow[rrd]  \arrow[loop left, xscale=2, yscale=5] \arrow[loop left, xscale=3, yscale=10] \arrow[loop left, xscale=4, yscale=15] & & & & |[shape=circle,draw]| w_2 \arrow[lld] \arrow[loop right, xscale=2, yscale=5] \arrow[loop right, xscale=3, yscale=10]  \arrow[loop right, xscale=4, yscale=15] \\
& & |[shape=circle,draw]| v \arrow[loop above, xscale=5, yscale=2] \arrow[loop above, xscale=10, yscale=3]& & 
\end{tikzcd}
\end{equation*}
Let $C_0=\{v\}$, $C_i=\{w_i\}$ for $i=1,2$ and $C=\{u_1,u_2\}$. Then we have $\pmc(E)=\{C_1,C_2,C\}$ and $\crit{v}(E)=\{C_1,C_2\}$. Let $\varphi_v$ the KMS state in Theorem \ref{thm:main} and 
let $a_i=\varphi_v(p_{w_i})$ for $i=1,2$. Then, by Theorem \ref{thm:main} and Proposition \ref{prop:coefficients}, we have 
\begin{equation} \label{eqn:example}
\varphi = (\harm_{u_1}^{C_1} + \harm_{u_2}^{C_1}+a_1)\psi_{C_1} + (\harm_{u_1}^{C_2} + \harm_{u_2}^{C_2}+a_2)\psi_{C_2}, 
\end{equation} 
where 
\[ \begin{bmatrix} \harm_{u_1}^{C_i} \vspace{0.2cm} \\ \harm_{u_2}^{C_i} \end{bmatrix} = (3I-A_{C})^{-1} A_{C,C_i} 
[a_i]. \]
Hence, 
\[ \harm_{u_1}^{C_1} = \frac37a_1,\ \harm_{u_2}^{C_1} = \frac17a_1,\ \harm_{u_1}^{C_2} = \frac27a_2,\ \harm_{u_2}^{C_2} = \frac37a_2. \]
In addition, we can directly see that $Z_{w_1,v}=Z_{w_2,v}$, which implies that $a_1=a_2$. Since the sum of coefficients in Equation \eqref{eqn:example} is equal to $1$, we have $a=7/23$. Consequently, we have
\[ \varphi_v = \frac{11}{23}\psi_{C_1} + \frac{12}{23}\psi_{C_2}. \]
By the effect of the minimal component $C$, coefficients of $\psi_{C_i}$ are not the same. 
\end{exm}

\appendix
\section{A direct proof of Theorem \ref{thm:aHLRS15}} \label{sec:appendix}

We include a direct proof of Theorem \ref{thm:aHLRS15} for reader's convenience. The essence of the proof is identical to the original one. Note that for a finite graph $E$, we have 
\begin{equation} \label{eqn:seneta}
 \rho(A_E) = \max \{ \rho(A_C) \colon C \in \pi(E)\} 
\end{equation}
by \cite[Section 4]{aHLRS15}. 

\begin{lem} \label{lem:subinvariance}
Let $\beta >0$ and suppose that there exists a $\beta$-harmonic vector $\harm$ of $E$. 
Then $\beta = \beta_C$ for some $C \in \pmc(E)$. 
Moreover, for each $C \in \pi(E)$, we have 
$\harm_C = 0$ if either $\rho(A_C)>e^\beta$, or $\rho(A_C)=e^\beta$ and $C \not \in \pmc(E)$. 
\end{lem}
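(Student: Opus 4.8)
The plan is to run a Perron--Frobenius sub-invariance argument that exploits the block-triangular structure of $A_E$ along the strongly connected components; I may assume $\harm\neq 0$. The one elementary tool I would isolate first is the following sub-invariance fact: if $B$ is a nonnegative irreducible matrix (allowing the $1\times 1$ matrix with a positive entry) or the $1\times 1$ zero matrix, and $x\geq 0$, $x\neq 0$, satisfies $Bx\leq\lambda x$ entrywise for some $\lambda>0$, then $x>0$ and $\rho(B)\leq\lambda$, and moreover if $Bx\neq\lambda x$ then $\rho(B)<\lambda$. For irreducible $B$ this holds because $\{i:x_i=0\}$ is forward-invariant for the directed graph of $B$, hence empty by irreducibility; pairing $Bx\leq\lambda x$ with a strictly positive left Perron eigenvector of $B$ then yields the inequality on $\rho(B)$, strictly when $Bx\neq\lambda x$. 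The $1\times 1$ cases are immediate.

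First I would localize $A_E\harm=e^\beta\harm$ to a fixed component $C\in\pi(E)$: the rows indexed by $C$ read $A_C\harm_C+(\text{nonnegative contributions of edges leaving }C)=e^\beta\harm_C$, so $A_C\harm_C\leq e^\beta\harm_C$. By the sub-invariance fact, $\harm_C\neq 0$ implies $\harm_C>0$ and $\rho(A_C)\leq e^\beta$; in particular $\rho(A_C)>e^\beta$ forces $\harm_C=0$. The same row computation shows that vanishing propagates forward along edges: $\harm_C=0$ forces $\harm_w=0$ for every $w$ at the head of an edge out of $C$, so (inducting along paths, using that a nonzero $\harm_{C'}$ is automatically strictly positive) $\harm_{C'}=0$ whenever $CE^*C'\neq\emptyset$; equivalently, $\harm_{C'}\neq 0$ propagates backward along paths.

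With these in hand, for the remaining case of the ``moreover'' clause suppose $\rho(A_C)=e^\beta$ but $C\notin\pmc(E)$; since $\beta>0$ this means $C$ is not minimal, so there is $C'\neq C$ with $C'E^*C\neq\emptyset$ and $\rho(A_{C'})\geq e^\beta$. If $\harm_C\neq 0$, backward propagation gives $\harm_{C'}>0$, whence $\rho(A_{C'})\leq e^\beta$, so $\rho(A_{C'})=e^\beta$; but following a path from $C'$ to $C$, the first vertex it reaches outside $C'$ lies in a component $C''\neq C'$ adjacent to $C'$ with $C''=C$ or $C''E^*C\neq\emptyset$, so $\harm_{C''}>0$ by backward propagation, making $A_{C'}\harm_{C'}\leq e^\beta\harm_{C'}$ strict in the coordinate of the tail of that edge; the sharp sub-invariance fact then gives $\rho(A_{C'})<e^\beta$, a contradiction. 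Hence $\harm_C=0$. Finally, for ``$\beta=\beta_C$ for some $C\in\pmc(E)$'', I would pick $C$ with $\harm_C\neq 0$ and $\harm_{C'}=0$ for every $C'\neq C$ with $CE^*C'\neq\emptyset$ (possible since the reachability order on components is a finite partial order): then all edges leaving $C$ contribute zero, so $A_C\harm_C=e^\beta\harm_C$; since $e^\beta\harm_C\neq 0$, $A_C$ is not the $1\times 1$ zero matrix, hence irreducible with $\rho(A_C)\geq 1$, and as $\harm_C>0$ is an eigenvector, Perron--Frobenius gives $e^\beta=\rho(A_C)>1$. Were $C$ not minimal, the ``moreover'' clause just proved would force $\harm_C=0$; so $C\in\pmc(E)$ and $\beta=\beta_C$.

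The step I expect to be the main obstacle is the non-minimal case of the ``moreover'' clause: it is precisely there that one needs the strict-inequality refinement of the sub-invariance lemma, together with the combinatorial bookkeeping of locating where a path from $C'$ to $C$ first exits $C'$ and checking (via backward propagation) that the component it enters also carries a nonzero $\harm$. Everything else reduces to standard Perron--Frobenius facts and routine manipulation of the defining equation $A_E\harm=e^\beta\harm$.
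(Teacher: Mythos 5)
Your proof is correct and takes essentially the same route as the paper's: a component-wise subinvariance argument (the paper cites Seneta's subinvariance theorem where you prove the needed fact directly), propagation of vanishing along edges, and the strict-inequality refinement to rule out non-minimal components with $\rho(A_C)=e^\beta$. If anything, your write-up is slightly more careful at two points: the paper's inequality $e^\beta\harm_D \geq A_D\harm_D + A_{D,C}\harm_C$ together with ``$A_{D,C}\neq 0$'' literally applies only when $D$ is adjacent to $C$, which your ``first exit from $C'$'' argument handles properly for arbitrary $C'$ with $C'E^*C\neq\emptyset$, and your choice of a reachability-minimal component carrying a nonzero block makes the existence of $C\in\pmc(E)$ with $\beta=\beta_C$ fully explicit.
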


\begin{proof}
we have 
\begin{equation} \label{eqn:subinv} 
e^\beta \harm_C = (A_E \harm)_C = \sum_{D \in \pi(E)} A_{C,D} \harm_D \geq A_C \harm_C 
\end{equation}
for all $C \in \pi(E)$. Then, the subinvariance theorem (see \cite[Theorem 1.6]{Sen}) implies that if $e^\beta < \rho(A_C)$, then $\harm_C = 0$. Hence, $\harm_L$ is an eigenvector of $A_L$ with eigenvalue $e^\beta$, where 
\[ L = \bigcup \{ C \in \pi(E) \colon \rho(A_C) \leq e^\beta \}. \]
If $\beta \neq \log \rho(A_{C})$ for any $C \in \pi(E)$, then we have $\rho(A_L) = \max \{ \rho(A_C) \colon \rho(A_C) < e^\beta \} <e^\beta$ by Equation \eqref{eqn:seneta}, which contradicts the fact that $e^\beta$ is an eigenvalue of $A_L$. Hence, there exists $C \in \pi(E)$ such that $\beta = \log \rho(A_{C})$. 

Let $C \in \pi(E)$ be a component satisfying $\harm_C \neq 0$ and $\rho(A_C) = e^\beta$ (which implies that $\harm_v>0$ for any $v \in C$). It suffices to show $C \in \pmc(E)$. Supoose $D \in \pi(E) \setminus \{C\}$ is a component such that $DE^*C \neq \emptyset$. Then, by the same argument as in Equation \eqref{eqn:subinv}, we have 
\begin{equation} \label{eqn:subinvariance} 
e^\beta \harm_D \geq A_D \harm_D + A_{D,C} \harm_C \geq A_D \harm_D. 
\end{equation}
Since $A_{D,C} \neq 0$ and $\harm_w >0$ for all $w \in C$, we have $A_{D,C} \harm_C \neq 0$, which implies that $\harm_D \neq 0$. The subinvariance theorem implies that either $\rho(A_D) < e^\beta$, or $\rho(A_D) = e^\beta$ and the equality in Inequation \eqref{eqn:subinvariance} holds. The latter case implies that $A_{D,C}\harm_C =0$, which is a contradiction. Hence we have $\rho(A_D) < e^\beta$, which implies $C \in \pmc(E)$.  
\end{proof}

\begin{proof}[Proof of Theorem \ref{thm:aHLRS15}]
Let $\harm$ be a $\beta$-harmonic vector of $E$. 
We show that $\harm$ is completely determined from $\harm_K$. Let 
\[ L = \bigcup \{ C \in \pi(E) \colon \rho(A_C) < e^\beta \},\ 
\tilde{S} = K \cup L,\ 
S = S_K.   \]
Note that $K \subset S \subset \tilde{S}$ by the minimality of all components in $K$. 
By Lemma \ref{lem:subinvariance}, $\harm_C=0$ if $C \in \pi(E) \setminus \pi(E_{\tilde{S}})$. Hence, $\harm_{\tilde{S}}$ is an eigenvector of $A_{\tilde{S}}$ with eigenvalue $e^\beta$. Since $A_{\tilde{S}}$ is of the form 
\[ A_{\tilde{S}} = \begin{bmatrix} A_{S} & * \\ 0 & A_{\tilde{S}\setminus S} \end{bmatrix}, \]
the vector $\harm_{\tilde{S}\setminus S}$ is an eigenvector of $A_{\tilde{S}\setminus S}$ with eigenvalue $e^\beta$ if it is nonzero. 
However, since $\tilde{S}\setminus S \subseteq L$, we have $\rho(A_{\tilde{S}\setminus S}) < e^\beta$ by Equation \eqref{eqn:seneta},  
which implies that $\harm_{\tilde{S}\setminus S}=0$. Consequently, we have $\harm_{E^0\setminus S}=0$ and $\harm_S$ is an eigenvector of $A_S$ with eigenvalue $e^\beta$. 

Let $B = A_{S\setminus K, K}$. Since $A_S$ is of the form
\[ A_S = \begin{bmatrix} A_{S\setminus K} & B \\ 0 & A_K \end{bmatrix}, \]
we have
\begin{equation} \label{eqn:main}
0 = \begin{bmatrix} e^\beta I-A_{S\setminus K} & -B \\ 0 & e^\beta I-A_K \end{bmatrix} 
\begin{bmatrix} \harm_{S\setminus K} \\ \harm_K \end{bmatrix}. 
\end{equation}
We have $\rho(A_{S\setminus K}) < e^\beta$ by Equation \eqref{eqn:seneta} and the fact that $S\setminus K \subset L$. Hence we multiply the matrix 
\[ \begin{bmatrix} (e^\beta I-A_{S\setminus K})^{-1} & 0 \\ 0 & I \end{bmatrix} \]
to the left and right hand sides of Equation \eqref{eqn:main}, so that we have 
\[
0 = \begin{bmatrix} I & -(e^\beta I-A_{S\setminus K})^{-1}B \\ 0 & e^\beta I-A_K \end{bmatrix} 
\begin{bmatrix} \harm_{S\setminus K} \\ \harm_K \end{bmatrix}, 
\]
and hence 
\[ \harm_{S\setminus K} -  (e^\beta I-A_{S\setminus K})^{-1}B \harm_K = 0,\ (e^\beta I-A_K)\harm_K=0. \]
Therefore, $\harm_K$ is a $\beta$-harmonic vector of $E_K$, which implies \eqref{item:restriction} in Theorem \ref{thm:aHLRS15}. 
In addition, $\harm_{S\setminus K}$ is determined from $\harm_K$, which implies the uniqueness of \eqref{item:uniqueness}. 

Conversely, a $\beta$-harmonic vector $\harm_K$ of $E_K$ extends to a $\beta$-harmonic vector $\harm_S$ of $E_S$ by letting 
\[ \harm_{S\setminus K} = (e^\beta I-A_{S\setminus K})^{-1}B \harm_K \] 
from the same argument as above. Since $A_E$ is of the form 
\[ A_E = \begin{bmatrix} A_{S} & * \\ 0 & A_{E^0\setminus S} \end{bmatrix}, \]
the vector $\harm_S$ exntends to a $\beta$-harmonic vector $\harm$ of $E$ by letting $\harm_{E^0\setminus S} = 0$. Hence, the existence in \eqref{item:uniqueness} and \eqref{item:extension} hold. 
\end{proof}

\bibliographystyle{amsplain}
\bibliography{Graph}

\end{document}